\definecolor{mygreen}{RGB}{28,172,0} 
\definecolor{mylilas}{RGB}{170,55,241}
\newtheorem{theorem}{Theorem}[section]
\newtheorem{conjecture}{Conjecture}
\newtheorem{corollary}{Corollary}[theorem]
\newtheorem{lemma}[theorem]{Lemma}
\newtheorem{proposition}[theorem]{Proposition}
\newtheorem{definition}[theorem]{Definition}
\newtheorem{remark}[theorem]{Remark}
\newcommand{\R}{\mathbb{R}}
\newcommand{\N}{\mathbb{N}}
\renewcommand{\H}{\mathcal{H}}
\newcommand{\C}{\mathcal{C}}
\newcommand{\K}{\mathcal{K}}
\newcommand{\Om}{\Omega}
\newcommand{\eps}{\varepsilon}
\begin{document}
\title{The monotonicity of the Cheeger constant for Parallel Bodies}
\author[]{Ilias Ftouhi}

\address[Ilias Ftouhi]{Friedrich-Alexander-Universität  Erlangen-Nürnberg, Department of Mathematics, Chair in Applied Analysis – Alexander von Humboldt Professorship, Cauerstr. 11, 91058 Erlangen, Germany.}
\email{ilias.ftouhi@fau.de}

\lstset{language=Matlab,%
    breaklines=true,%
    morekeywords={matlab2tikz},
    keywordstyle=\color{blue},%
    morekeywords=[2]{1}, keywordstyle=[2]{\color{black}},
    identifierstyle=\color{black},%
    stringstyle=\color{mylilas},
    commentstyle=\color{mygreen},%
    showstringspaces=false,
    numbers=left,%
    numberstyle={\tiny \color{black}},
    numbersep=9pt, 
    emph=[1]{for,end,break},emphstyle=[1]\color{red}, 
}

\date{\today}

\begin{abstract}
We prove that for every planar convex set $\Om$, the function $t\in (-r(\Om),+\infty)\longmapsto \sqrt{|\Om_t|}h(\Om_t)$ is monotonically decreasing, where $r$, $|\cdot|$ and $h$ stand for the inradius, the measure and the Cheeger constant and $(\Om_t)$ for parallel bodies of $\Om$. The result is shown not to hold when the convexity assumption is dropped. We also prove the differentiability of the map $t\longmapsto h(\Om_t)$ in any dimension and without any regularity assumption on the convex $\Om$, obtaining an explicit formula for the derivative. Those results are then combined to obtain estimates on the contact surface of the Cheeger sets of convex bodies. Finally, potential generalizations to other functionals such as the first eigenvalue of the Dirichlet Laplacian are explored.
\end{abstract}

\maketitle




\section{Introduction}

Let $\Om$ be a bounded subset of $\R^n$ (where $n\ge2$). The Cheeger problem consists in studying the following minimization problem
\begin{equation}\label{def:cheeger}
h(\Om) := \inf\left\{\ \frac{P(E)}{|E|}\ \Big|\ E\ \text{measurable and}\ E\subset \Om\right\} ,    
\end{equation}
where $P(E)$ is the distributional perimeter of $E$ measured with respect to $\R^n$ (see for example \cite{MR2832554} for definitions) and $|E|$ is the $n$-dimensional Lebesgue measure of $E$. The quantity $h(\Om)$ is called \textit{the Cheeger constant of $\Om$} and any set $C^\Om \subset \Om$ for which the infimum is attained is called a \textit{Cheeger set of $\Om$}. \vspace{2mm}

Since Jeff Cheeger's pioneering work \cite{jeff_cheeger}, the Cheeger problem has garnered considerable interest from numerous authors. A comprehensive introductory survey on the topic can be found in \cite{MR2832554}.\vspace{2mm}

It is known that every bounded domain $\Om$ with Lipschitz boundary admits at least one Cheeger set $C^\Om$, see for example \cite[Proposition 3.1]{MR2832554}. In \cite{uniqueness}, the authors prove the uniqueness of the Cheeger set when $\Om\subset \R^n$ is convex. Nevertheless, as far as we know, apart from rotationally invariant domains \cite{rotationaly}, there is no explicit characterization of Cheeger sets in higher dimensions $n\ge3$ (even when the convexity is assumed), in contrast to the planar case, where Bernd Kawohl and Thomas Lachand-Robert provided a complete characterization of Cheeger sets for planar convex domains in \cite{algo}. For results in larger classes of sets, we refer to \cite{MR3719067,MR3451406,MR4156828,saracco_cylinder}.\vspace{2mm}

The planar convex case has recently attracted significant interest, with numerous contributions from various authors, including works that provide sharp inequalities relating the Cheeger constant to other geometric functionals \cite{FtJMAA,ftouhi_contemporary,ftouhi_paoli}, proving symmetry results on the Cheeger set of rotationally symmetric planar convex bodies \cite{zbMATH07423456}, establishing a Blaschke--Lebesgue type theorem for the Cheeger constant \cite{arXiv:2303.15559,zbMATH07838055} or proving some quantitative isoperimetric estimates \cite{drach}. \vspace{2mm}

If $\Omega$ is a given open subset of $\R^n$ and $t\in\R$, we define its parallel bodies as follows: 
$$\Om_{t}:=\begin{cases}
    \{x\in\R^n\ |\ B(x,|t|)\subset \Om\},\ \ \ \ \ \ \ \ \ \ \ \ \text{if $t\leq 0$},  \\
    \{x+y\ |\ x\in\Om\ \text{and}\ y\in B(0,t)\},\ \ \ \ \text{if $t> 0$},  
\end{cases}$$
where $B(a,t)$ stands for the closed ball of center $a\in \R^n$ and radius $t$. Such sets are called parallel sets of $\Omega$ because their boundaries can be morally obtained by moving the boundary of $\Omega$ following the directions given by the normals to its regular boundary points with the same distance.  \vspace{2mm}

Parallel bodies have been the focus of extensive study in convex geometry, as they provide important tools of intrinsic interest, see for example \cite[Section 7.5]{schneider} and the references therein. \vspace{2mm}

If $\Omega\subset\R^n$, we denote by $r(\Om)$ its inradius, i.e., the radius of the largest open ball included in $\Omega$. From now on, $\K^n$ ($n\ge 2$) stands for the class of non-empty, bounded and convex subsets of $\R^n$.  \vspace{2mm}

Let us now introduce the notion of the form body of a convex set $\Om$ that will allow us to define \textit{tangential bodies}, which play an important role in the present paper. 
\begin{definition}\label{def:formbody}
Let $\Om\in\K^n$. A point $x\in\partial\Omega$ is called \emph{regular} if there exists a unique supporting hyperplane at $x$. The set of all regular points of $\partial\Om$ is denoted by ${\rm reg}(\Om)$. We also let $U(\Om)$ denote the set of all outward pointing unit normals to $\partial\Om$ at points of ${\rm reg}(\Om)$.

The form body $\Omega^\star$ of the set $\Om$ is
defined as
$$\Om^\star =\bigcap_{u\in U(\Om)} \{x\in \R^n: \, \langle x,u\rangle\le 1\}.$$
A convex set that is homothetic to its form body is called a tangential body. 
\end{definition}

\begin{figure}[!h]
    \centering

\tikzset{every picture/.style={line width=0.75pt}} 

\begin{tikzpicture}[x=0.75pt,y=0.75pt,yscale=-1,xscale=1]

\draw   (19,191) -- (42.7,112) -- (145.63,112) -- (169.33,191) -- cycle ;
\draw   (185.33,191) -- (208.43,114) -- (266.23,114) -- (289.33,191) -- cycle ;
\draw  [dash pattern={on 4.5pt off 4.5pt}] (43.67,151.5) .. controls (43.67,130.24) and (60.9,113) .. (82.17,113) .. controls (103.43,113) and (120.67,130.24) .. (120.67,151.5) .. controls (120.67,172.76) and (103.43,190) .. (82.17,190) .. controls (60.9,190) and (43.67,172.76) .. (43.67,151.5) -- cycle ;
\draw  [dash pattern={on 4.5pt off 4.5pt}] (199.33,152.5) .. controls (199.33,131.51) and (216.35,114.5) .. (237.33,114.5) .. controls (258.32,114.5) and (275.33,131.51) .. (275.33,152.5) .. controls (275.33,173.49) and (258.32,190.5) .. (237.33,190.5) .. controls (216.35,190.5) and (199.33,173.49) .. (199.33,152.5) -- cycle ;
\draw [color={rgb, 255:red, 208; green, 2; blue, 27 }  ,draw opacity=1 ]   (84.17,151.5) -- (118.67,151.5) ;
\draw [shift={(120.67,151.5)}, rotate = 180] [color={rgb, 255:red, 208; green, 2; blue, 27 }  ,draw opacity=1 ][line width=0.75]    (10.93,-3.29) .. controls (6.95,-1.4) and (3.31,-0.3) .. (0,0) .. controls (3.31,0.3) and (6.95,1.4) .. (10.93,3.29)   ;
\draw [shift={(82.17,151.5)}, rotate = 0] [color={rgb, 255:red, 208; green, 2; blue, 27 }  ,draw opacity=1 ][line width=0.75]    (10.93,-3.29) .. controls (6.95,-1.4) and (3.31,-0.3) .. (0,0) .. controls (3.31,0.3) and (6.95,1.4) .. (10.93,3.29)   ;
\draw [color={rgb, 255:red, 208; green, 2; blue, 27 }  ,draw opacity=1 ]   (239.33,152.5) -- (273.33,152.5) ;
\draw [shift={(275.33,152.5)}, rotate = 180] [color={rgb, 255:red, 208; green, 2; blue, 27 }  ,draw opacity=1 ][line width=0.75]    (10.93,-3.29) .. controls (6.95,-1.4) and (3.31,-0.3) .. (0,0) .. controls (3.31,0.3) and (6.95,1.4) .. (10.93,3.29)   ;
\draw [shift={(237.33,152.5)}, rotate = 0] [color={rgb, 255:red, 208; green, 2; blue, 27 }  ,draw opacity=1 ][line width=0.75]    (10.93,-3.29) .. controls (6.95,-1.4) and (3.31,-0.3) .. (0,0) .. controls (3.31,0.3) and (6.95,1.4) .. (10.93,3.29)   ;
\draw   (352,144.33) .. controls (352,129.79) and (363.79,118) .. (378.33,118) -- (448.33,118) -- (448.33,188) -- (352,188) -- cycle ;
\draw   (483.67,152) .. controls (483.67,133.77) and (498.44,119) .. (516.67,119) -- (549.67,119) -- (549.67,186) -- (483.67,186) -- cycle ;
\draw  [dash pattern={on 4.5pt off 4.5pt}] (365.79,153) .. controls (365.79,134.02) and (381.18,118.63) .. (400.17,118.63) .. controls (419.15,118.63) and (434.54,134.02) .. (434.54,153) .. controls (434.54,171.98) and (419.15,187.37) .. (400.17,187.37) .. controls (381.18,187.37) and (365.79,171.98) .. (365.79,153) -- cycle ;
\draw [color={rgb, 255:red, 208; green, 2; blue, 27 }  ,draw opacity=1 ]   (432.54,153) -- (402.17,153) ;
\draw [shift={(400.17,153)}, rotate = 360] [color={rgb, 255:red, 208; green, 2; blue, 27 }  ,draw opacity=1 ][line width=0.75]    (10.93,-3.29) .. controls (6.95,-1.4) and (3.31,-0.3) .. (0,0) .. controls (3.31,0.3) and (6.95,1.4) .. (10.93,3.29)   ;
\draw [shift={(434.54,153)}, rotate = 180] [color={rgb, 255:red, 208; green, 2; blue, 27 }  ,draw opacity=1 ][line width=0.75]    (10.93,-3.29) .. controls (6.95,-1.4) and (3.31,-0.3) .. (0,0) .. controls (3.31,0.3) and (6.95,1.4) .. (10.93,3.29)   ;
\draw  [dash pattern={on 4.5pt off 4.5pt}] (483.67,152) .. controls (483.67,133.77) and (498.44,119) .. (516.67,119) .. controls (534.89,119) and (549.67,133.77) .. (549.67,152) .. controls (549.67,170.23) and (534.89,185) .. (516.67,185) .. controls (498.44,185) and (483.67,170.23) .. (483.67,152) -- cycle ;
\draw [color={rgb, 255:red, 208; green, 2; blue, 27 }  ,draw opacity=1 ]   (547.67,152.03) -- (518.67,152.47) ;
\draw [shift={(516.67,152.5)}, rotate = 359.13] [color={rgb, 255:red, 208; green, 2; blue, 27 }  ,draw opacity=1 ][line width=0.75]    (10.93,-3.29) .. controls (6.95,-1.4) and (3.31,-0.3) .. (0,0) .. controls (3.31,0.3) and (6.95,1.4) .. (10.93,3.29)   ;
\draw [shift={(549.67,152)}, rotate = 179.13] [color={rgb, 255:red, 208; green, 2; blue, 27 }  ,draw opacity=1 ][line width=0.75]    (10.93,-3.29) .. controls (6.95,-1.4) and (3.31,-0.3) .. (0,0) .. controls (3.31,0.3) and (6.95,1.4) .. (10.93,3.29)   ;

\draw (253,132.4) node [anchor=north west][inner sep=0.75pt]  [font=\small,color={rgb, 255:red, 208; green, 2; blue, 27 }  ,opacity=1 ]  {$1$};
\draw (86,130.4) node [anchor=north west][inner sep=0.75pt]  [font=\small,color={rgb, 255:red, 208; green, 2; blue, 27 }  ,opacity=1 ]  {$r( \Omega )$};
\draw (84.17,193.4) node [anchor=north west][inner sep=0.75pt]    {$\Omega $};
\draw (228,191.4) node [anchor=north west][inner sep=0.75pt]    {$\Omega ^{*}$};
\draw (401,132.4) node [anchor=north west][inner sep=0.75pt]  [font=\small,color={rgb, 255:red, 208; green, 2; blue, 27 }  ,opacity=1 ]  {$r( \Omega )$};
\draw (530,135.4) node [anchor=north west][inner sep=0.75pt]  [font=\small]  {$\textcolor[rgb]{0.82,0.01,0.11}{1}$};
\draw (394,189.4) node [anchor=north west][inner sep=0.75pt]    {$\Omega $};
\draw (509,187.4) node [anchor=north west][inner sep=0.75pt]    {$\Omega ^{*}$};

\end{tikzpicture}

    \caption{Two convex sets in $\R^2$ and their corresponding form bodies.}
    \label{fig:form_bodies}
\end{figure}

Parallel bodies play an important role in shape optimization and isoperimetric inequalities as they can provide relevant flows allowing to control the evolution of a given shape functional. A first result in this direction was obtained early in 1978 by Matheron \cite{matheron}. It states that for every $\Om\in\K^n$, we have
\begin{equation}\label{ineq:matheron}
\forall t\in (0,r(\Om)),\ \ \ \ \   |\Om_{-t}|\ge |\Om|\left(1-\frac{t}{r(\Om)}\right)^n=|\Om|\left(\frac{r(\Om_{-t})}{r(\Om)}\right)^n,
\end{equation}
with equality if and only if $\Om$ is a tangential body. {We note that we have used the well known equality $r(\Om_{-t}) = r(\Om)-t$, for all $t\in[0,r(\Omega))$, see for example \cite[Lemma 1.4]{MR3506959}}. \vspace{2mm}

A similar result for the perimeter has been lately proved by Larson in \cite{MR3506959}
\begin{equation}\label{ineq:larson}
\forall t\in(0,r(\Om)),\ \ \ \     P(\Om_{-t})\ge P(\Om)\left(1-\frac{t}{r(\Om)}\right)^{n-1} = P(\Om)\left(\frac{r(\Om_{-t})}{r(\Om)}\right)^{n-1},
\end{equation}
with equality if and only if $\Om$ is a tangential body.\vspace{2mm}

In fact, inequalities \eqref{ineq:matheron} and \eqref{ineq:larson} readily imply that the functions $t\in(-r(\Om),+\infty)\longmapsto \frac{|\Om_{t}|}{r(\Om_t)^n}$ and $t\in(-r(\Om),+\infty)\longmapsto \frac{P(\Om_{t})}{r(\Om_t)^{n-1}}$ are monotonically decreasing.  \vspace{2mm}

Recently, other functionals have been considered in \cite{isoperimetric}, in particular, the isoperimetric quotient relating the perimeter and the volume functionals. The authors prove that for every $\Om\in\K^n$, the function $t\in(-r(\Om),+\infty)\longmapsto \frac{P(\Om_t)^{\frac{1}{n-1}}}{|\Om_{t}|^{\frac{1}{n}}}$ is monotonically decreasing. They also prove similar results for functionals given by quotients of  Quermassintegrals, see \cite[Section 5]{isoperimetric}. We finally refer to \cite{zbMATH07036935,zbMATH07757965} similar results under certain boundary restrictions of the
involved convex bodies and to  \cite{arXiv:2101.03307} for a result in the anisotropic setting. \vspace{2mm}

In the present paper, we prove the same result for the scale-invariant functional $\Om\longmapsto \sqrt{|\Om|}h(\Om)$ defined on the class $\K^2$ of planar convex sets. Our main result in this direction is the following: 


\begin{theorem}\label{th:monotonicity_cheeger}
Let $\Om$ be an element of $\K^2$. 
\begin{itemize}
    \item If $\Omega$ is a ball, then, the function $t\longmapsto \sqrt{|\Om_t|}h(\Om_t)$ is constant on $(-r(\Om),+\infty)$.
    \item  If $\Om$ is not a ball, we introduce the following (negative) constant
$$\tau_\Om:=\inf\{t\in (-r(\Om),+\infty)\ |\ \Om_t\ \text{is not a tangential body}\}\leq 0.$$
The function $t\longmapsto \sqrt{|\Om_t|}h(\Om_t)$ is constant on $(-r(\Om),\tau_\Om)$  and strictly decreasing on $(\tau_\Om,+\infty)$. 
\end{itemize}

\end{theorem}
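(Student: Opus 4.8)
The plan is to combine three ingredients: (i) the explicit characterization of planar Cheeger sets and the resulting formula for $h$ on convex bodies, (ii) the known behaviour of area and perimeter under the parallel-body flow, and (iii) the rigidity in Matheron's and Larson's inequalities, which is exactly the tangential-body condition appearing in the statement. Recall from \cite{algo} that for $\Om\in\K^2$ one has $h(\Om) = 1/\rho(\Om)$, where $\rho(\Om)$ is the unique value $\rho>0$ such that $|\Om^{-\rho}| = \pi\rho^2$ (here $\Om^{-\rho}$ denotes the inner parallel set at distance $\rho$); equivalently the Cheeger set is $\Om^{-\rho}\oplus B(0,\rho)$. So it suffices to understand how $\rho(\Om_t)$ and $|\Om_t|$ evolve in $t$ and to show $\sqrt{|\Om_t|}/\rho(\Om_t)$ is nonincreasing, strictly so past $\tau_\Om$.

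**Reduction to a one-variable inequality.** First I would record the semigroup property of parallel bodies: $(\Om_t)_s = \Om_{t+s}$ for all admissible $s,t$ (for $t,s$ of the same sign this is immediate; the mixed-sign case uses that inner and outer parallel operations are inverse to the extent allowed by the inradius). This gives $\rho(\Om_t) = \rho(\Om) + t$ is \emph{false} in general — rather, the defining relation becomes: $\rho(\Om_t)$ is the $\rho$ solving $|(\Om_t)_{-\rho}| = |\Om_{t-\rho}| = \pi\rho^2$. Differentiating in $t$ (legitimate once we know $t\mapsto|\Om_t|$ is $C^1$, which for the outer part $t>0$ is the Steiner formula $|\Om_t| = |\Om| + tP(\Om) + \pi t^2$, and for the inner part follows from the paper's own differentiability result together with $\frac{d}{dt}|\Om_{-t}| = -P(\Om_{-t})$), one gets an ODE-type relation between $\rho'(\Om_t)$, $P(\Om_{t-\rho})$ and $\pi\rho$. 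The monotonicity of $g(t):=\sqrt{|\Om_t|}/\rho(\Om_t)$ then reduces, after taking logarithmic derivatives, to showing
\[
\frac{1}{2}\,\frac{P(\Om_t)}{|\Om_t|}\ \le\ \frac{\rho'(\Om_t)}{\rho(\Om_t)}.
\]
Using $\pi\rho(\Om_t)^2 = |\Om_{t-\rho(\Om_t)}|$ and the relation obtained by differentiation, the right-hand side can be rewritten purely in terms of perimeters of parallel bodies, and the inequality becomes a comparison between $P(\Om_t)/|\Om_t|$ and a ratio involving $P(\Om_{t-\rho})$ and $\rho$; this is precisely where Larson's inequality \eqref{ineq:larson} (applied to the convex body $\Om_t$ with inner parallel distance $\rho$) enters to close the gap, and the equality case of \eqref{ineq:larson} — $\Om_t$ a tangential body — is what forces $g$ to be \emph{locally constant} exactly on $(-r(\Om),\tau_\Om)$.

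**The ball case and the constancy interval.** For $\Om$ a ball of radius $R$, every $\Om_t$ is a ball, $|\Om_t| = \pi(R+t)^2$, and the Cheeger set of a ball is the ball itself, so $h(\Om_t) = 2/(R+t)$ and $\sqrt{|\Om_t|}h(\Om_t) = 2\sqrt{\pi}$ — constant. For general $\Om$, by definition of $\tau_\Om$ every $\Om_t$ with $t<\tau_\Om$ is a tangential body; for such bodies equality holds throughout \eqref{ineq:matheron} and \eqref{ineq:larson}, so $|\Om_t| = \pi\,(r(\Om_t)/\rho_0)^2\,|\cdot|$-type scaling forces $\Om_t$ and all its further inner parallel sets to be homothetic, whence $\rho(\Om_t)$ is proportional to $r(\Om_t) = r(\Om)+t$ and $\sqrt{|\Om_t|}$ is proportional to $r(\Om)+t$ as well, making the quotient constant. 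Conversely, for $t>\tau_\Om$ the body $\Om_t$ is \emph{not} tangential (one must check $\tau_\Om$ is the threshold and that being non-tangential is preserved under $t\mapsto\Om_t$ for $t$ increasing — a monotonicity of tangentiality that should follow from the semigroup property and the fact that outer parallel sets of non-tangential bodies stay non-tangential), so the inequality in the displayed estimate is strict and $g$ is strictly decreasing.

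**Main obstacle.** I expect the principal difficulty to be the rigorous differentiability and chain-rule bookkeeping for $t\mapsto\rho(\Om_t)$: one must show $\rho(\Om_t)$ is differentiable (via the implicit function theorem applied to $F(t,\rho) = |\Om_{t-\rho}| - \pi\rho^2 = 0$, using strict monotonicity of $F$ in $\rho$ and the paper's differentiability result for $t\mapsto|\Om_t|$ — note $t-\rho$ can be negative, so we genuinely need the inner-parallel differentiability), and then correctly track how $\frac{d}{dt}|\Om_{t-\rho(\Om_t)}|$ splits via the formula $\frac{d}{ds}|\Om_s| = P(\Om_s)$ (for $s>-r$, with the convention $P(\Om_s) = P(\Om)\cdot(\text{Steiner})$ when $s\ge0$). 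The second delicate point is the equality-case analysis: translating "equality in Larson's inequality at every relevant scale" into "$\Om_t$ is a tangential body" and matching that with the precise interval $(-r(\Om),\tau_\Om)$, including verifying that the set of $t$ for which $\Om_t$ is tangential is an interval of the form $(-r(\Om),\tau_\Om]$ or $(-r(\Om),\tau_\Om)$. Once these two technical pillars are in place, the monotonicity and its strictness fall out of \eqref{ineq:larson} and the Kawohl–Lachand-Robert formula essentially by a logarithmic-derivative computation.
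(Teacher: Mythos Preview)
Your differentiation strategy is natural, but the step where you invoke Larson's inequality \eqref{ineq:larson} does not close. After implicit differentiation of $|\Om_{t-\rho}|=\pi\rho^2$ one gets $\rho'=P_0/(P_0+2\pi\rho)$ with $P_0:=P(\Om_{t-\rho})$, and the monotonicity of $g(t)=\sqrt{|\Om_t|}/\rho(\Om_t)$ becomes
\[
\frac{P(\Om_t)}{2|\Om_t|}\ \le\ \frac{P_0}{\rho\,(P_0+2\pi\rho)}.
\]
The right-hand side is increasing in $P_0$, so a lower bound on $P_0$ is the right type of input---but Larson's bound is too weak. For $\Om=(0,2)\times(0,1)$ one computes $\rho\approx 0.35$, $P(\Om)=6$, $|\Om|=2$, $P_0=6-8\rho\approx 3.2$; the displayed inequality reads $1.5\le 1.69$, which is true. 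Larson, however, only gives $P_0\ge P(\Om)\bigl(1-\rho/r(\Om)\bigr)=6\cdot 0.3=1.8$, and substituting this lower bound makes the right-hand side $\approx 1.29<1.5$. So Larson cannot prove the differential inequality, and the argument as written has a genuine gap at its central step.

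The paper proceeds quite differently and avoids differentiation. It normalizes to unit area and shows directly that, for $\Om$ not a tangential body and any $c\in(0,r(\Om))$, the function
\[
s\ \longmapsto\ \Bigl|\bigl(\tfrac{1}{\sqrt{|\Om|}}\,\Om\bigr)_{-s}\Bigr|\ -\ \Bigl|\bigl(\tfrac{1}{\sqrt{|\Om_{-c}|}}\,\Om_{-c}\bigr)_{-s}\Bigr|
\]
is strictly positive on the relevant interval: it vanishes at $0$, is strictly positive at the right endpoint by the strict case of Matheron's inequality \eqref{ineq:matheron}, and is \emph{concave} in between---the concavity coming from the concavity of $s\mapsto P(\Om_{-s})$ (taken from \cite{drach}), not from Larson. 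Since the Cheeger radius of a planar convex body is the unique root of $|K_{-s}|=\pi s^2$, the strict area ordering forces the strict ordering of Cheeger constants; positive $t$ are then handled by applying the inner-set comparison to $\Om_t$ (whose outer parallel sets are never tangential bodies unless $\Om$ is a ball). Your route could likely be salvaged by replacing Larson with a sharper input---for instance the isoperimetric-quotient monotonicity of \cite{isoperimetric} combined with the known bound $\sqrt{|\Om|}\,h(\Om)\ge P(\Om)/(2\sqrt{|\Om|})+\sqrt{\pi}$---but that is a different argument from the one you outlined.
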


Let us give a few comments on this result and its proof: 
\begin{itemize}
    \item The result of Theorem \ref{th:monotonicity_cheeger} is limited to the planar case as it relies on the explicit characterization of the Cheeger constant obtained in \cite{algo} which does not hold in higher dimensions where finding an explicit construction of the Cheeger sets for generic convex bodies seems out of reach, as discussed in \cite{rotationaly}. 
    \item The proof of Theorem \ref{th:monotonicity_cheeger} is presented in four steps: First, we remark that if $\Om$ is a tangential body, then for all $t\in (0,r(\Om))$, $\Om_{-t}$ is homothetic to $\Om$ which implies that $t\longmapsto \sqrt{|\Om_{-t}|}h(\Om_{-t})$ is constant on $(0,r(\Om))$. Then, we prove that if $\Om$ is not a tangential body, we have \begin{equation*}
\forall t\in \left(0,\frac{r(\Om)}{\sqrt{|\Om|}}\right),\ \ \ \ \left|\left(\frac{1}{\sqrt{|\Om|}}\cdot\Om\right)_{-t}\right|> \left|\left(\frac{1}{\sqrt{|\Om_{-c}|}}\cdot\Om_{-c}\right)_{-t}\right|,    
\end{equation*}
{where $c\in(0,r(\Omega))$ is a given constant}.

This result on the area of inner sets is then combined with the Characterization of the Cheeger sets of planar convex sets given in \cite{algo} to show that $$h\left(\frac{1}{\sqrt{|\Om|}}\cdot\Om\right)<h\left(\frac{1}{\sqrt{|\Om_{-c}|}}\cdot\Om_{-c}\right),$$
which is equivalent to 
$$\sqrt{|\Om|}h(\Om)<\sqrt{|\Om_{-c}|}h(\Om_{-c}),$$
because of the scaling property $h(\gamma\Om)= h(\Om)/\gamma$, with $\gamma>0$. At last, we deal with the case of positive $t$ by applying the previous step for $\Omega$ being an inner parallel body of $\Om_{t}$. \vspace{2mm}
\end{itemize}

The result of Theorem \ref{th:monotonicity_cheeger} is then used to obtain information on the measure of the contact surface $\partial \C^\Om\cap \partial \Om$ of the boundary of the Cheeger set with the boundary of $\Om$. Indeed, we are able to prove the following:
\begin{equation}\label{ineq:contact_surface}
     \forall \Omega\in\K^2,\ \ \ \      \frac{\H^1(\partial C^\Om\cap \partial \Om)}{\H^1(\partial \Om)}\ge \frac{1}{2}\cdot \frac{|C^\Om|}{|\Om|},
\end{equation}
where $\H^1$ stands for the $1$-dimensional Hausdorff measure.  \vspace{2mm}

The estimate \eqref{ineq:contact_surface} is a direct consequence of the (negative) sign of the derivative of $t\longmapsto \sqrt{|\Om_t|}h(\Om_t)$ at $0$. If the differentiability of $t\longmapsto |\Om_t|$ is well established, it is not the case for the functional $t\longmapsto h(\Om_t)$. Computing the differential of the latter function is a topic of intrinsic interest. We then prove the following result which holds for any dimension:
\begin{theorem}\label{th:derivative_cheeger}
    Let $\Om\in\K^n$ with $n\ge 2$. The function $f:t\longmapsto h(\Om_t)$ admits a derivative in $0$ and we have
    $$f'(0) = \lim_{t\rightarrow 0} \frac{h(\Om_t)-h(\Om)}{t} = \frac{K(C^\Om)}{|C^\Om|} - h(\Om)^2 = \frac{1}{|C^\Om|}\int_{\partial C^\Om\cap \partial \Om} (\kappa - h(\Om))d\mathcal{H}^{n-1},$$
    where $K(C^\Om):= \int_{\partial C^\Om} \kappa d\H^{n-1}$ is the total mean curvature of the boundary of $C^\Om$. 
\end{theorem}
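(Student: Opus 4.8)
Throughout write $f(t)=h(\Om_t)$ and $L:=\dfrac{K(C^\Om)}{|C^\Om|}-h(\Om)^2$.

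\textbf{Setup and reduction.} I will use three standard properties of Cheeger sets of convex bodies: for $\Om\in\K^n$, $C^\Om$ has a $C^{1,1}$ boundary, its \emph{free boundary} $\partial C^\Om\cap\Om$ has constant mean curvature (sum of principal curvatures) equal to $h(\Om)$, and $C^\Om$ satisfies a uniform interior ball condition with some radius $\rho_0=\rho_0(\Om)>0$; the same holds for the bodies $C^{\Om_t}$, with $\rho_0(\Om_t)$ bounded away from $0$ as $t\to0$, and moreover $C^{\Om_t}\to C^\Om$ in Hausdorff distance as $t\to0$ (stability of Cheeger sets on $\K^n$, via uniqueness and compactness). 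I also use that $|\cdot|$, $P$ and the total mean curvature $K(\cdot)=\int_{\partial(\cdot)}\kappa\,d\H^{n-1}$ are continuous on $\K^n$ and that $K$ is the first Steiner coefficient of the perimeter, i.e. $\frac{d}{ds}\big|_{s=0}P\big(A\oplus B(0,s)\big)=K(A)$ for $A\in\K^n$. The equality of the two expressions for $f'(0)$ is then immediate: $\frac{K(C^\Om)}{|C^\Om|}-h(\Om)^2=\frac{1}{|C^\Om|}\big(K(C^\Om)-h(\Om)P(C^\Om)\big)=\frac{1}{|C^\Om|}\int_{\partial C^\Om}(\kappa-h(\Om))\,d\H^{n-1}$, and the integrand vanishes $\H^{n-1}$-a.e. on $\partial C^\Om\cap\Om$, so only $\partial C^\Om\cap\partial\Om$ survives. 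The plan is to prove $f(t)-f(0)=Lt+o(t)$ by sandwiching this difference between an outer and an inner competitor.

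\textbf{Upper bound.} For $t\in(-\rho_0,+\infty)$ put $\psi(t):=P\big((C^\Om)_t\big)/\big|(C^\Om)_t\big|$, where $(C^\Om)_t$ is the parallel body of $C^\Om$. Since $C^\Om\subseteq\Om$ and parallel bodies are monotone, $(C^\Om)_t\subseteq\Om_t$, hence $h(\Om_t)\le\psi(t)$, with $\psi(0)=h(\Om)=f(0)$. For $t\ge0$, $(C^\Om)_t=C^\Om\oplus B(0,t)$ and numerator and denominator are polynomials in $t$ by Steiner's formula; for $t\in(-\rho_0,0)$ the interior-ball condition gives $(C^\Om)_t\oplus B(0,|t|)=C^\Om$, from which the addition formula for quermassintegrals again makes them polynomials in $|t|$. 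Thus $\psi$ is differentiable at $0$ with $\psi'(0)=\dfrac{K(C^\Om)|C^\Om|-P(C^\Om)^2}{|C^\Om|^2}=L$, so $f(t)-f(0)\le\psi(t)-\psi(0)=Lt+o(t)$.

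\textbf{Lower bound.} Consider the competitor $E_t:=(C^{\Om_t})_{-t}$. From $C^{\Om_t}\subseteq\Om_t$ one gets $E_t\subseteq(\Om_t)_{-t}\subseteq\Om$ (with $(\Om_t)_{-t}=\Om$ for $t\ge0$), and for $|t|$ small $E_t$ is a genuine convex body since $r(C^{\Om_t})\to r(C^\Om)>0$; hence $h(\Om)\le P(E_t)/|E_t|$. Now expand $P(E_t)/|E_t|$ around $C^{\Om_t}$: when $t<0$, $E_t=C^{\Om_t}\oplus B(0,|t|)$ and Steiner's formula gives $P(E_t)=P(C^{\Om_t})+K(C^{\Om_t})|t|+O(t^2)$, $|E_t|=|C^{\Om_t}|+P(C^{\Om_t})|t|+O(t^2)$; when $0<t<\rho_0(\Om_t)$, the interior-ball condition of $C^{\Om_t}$ gives $E_t\oplus B(0,t)=C^{\Om_t}$, whence $P(E_t)=P(C^{\Om_t})-K(C^{\Om_t})t+O(t^2)$ and $|E_t|=|C^{\Om_t}|-P(C^{\Om_t})t+O(t^2)$. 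In both cases $P(E_t)/|E_t|=h(\Om_t)-L_t\,t+O(t^2)$ with $L_t:=\frac{K(C^{\Om_t})}{|C^{\Om_t}|}-h(\Om_t)^2$, the $O(t^2)$ being uniform since the bodies stay in a Hausdorff-precompact family. By stability and continuity, $L_t\to L$; therefore $h(\Om)\le h(\Om_t)-Lt+o(t)$, i.e. $f(t)-f(0)=h(\Om_t)-h(\Om)\ge Lt+o(t)$. Combined with the upper bound, $f(t)-f(0)=Lt+o(t)$, so $f'(0)=L$.

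\textbf{Main obstacle.} The crux is the one-sided fragility of the first-order expansion of the perimeter under \emph{inner} parallel deformation: for a general convex body the inner-parallel perimeter is only controlled from below (this is precisely Larson's inequality \eqref{ineq:larson}), and its rate at $0$ differs from $-K$ unless the body is sufficiently regular. The argument is therefore routed entirely through the Cheeger sets $C^\Om$ and $C^{\Om_t}$, which — unlike $\Om$ itself — are $C^{1,1}$ and satisfy a uniform interior ball condition, so their inner parallel bodies obey a two-sided Steiner expansion; establishing (or quoting) this regularity, together with the Hausdorff stability $C^{\Om_t}\to C^\Om$, the uniformity of the remainders over the resulting precompact family, and the continuity of the quermassintegral $K$, is where the real work lies.
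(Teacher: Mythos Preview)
Your proof is correct and follows essentially the same approach as the paper: use the parallel body $(C^\Om)_t$ as a competitor in $\Om_t$ for the upper bound and the inverse parallel body $(C^{\Om_t})_{-t}$ as a competitor in $\Om$ for the lower bound, invoking the $C^{1,1}$ regularity / interior ball condition of Cheeger sets (Blaschke rolling) to justify the two-sided Steiner expansion and the Hausdorff stability $C^{\Om_t}\to C^\Om$ together with continuity of quermassintegrals to pass to the limit in the coefficients. The only cosmetic difference is that you treat both signs of $t$ simultaneously, whereas the paper does $t\to0^+$ explicitly and leaves $t\to0^-$ to ``similar arguments''.
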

Let us give a few comments on this result: 
\begin{itemize}
    \item Computing derivatives of functions of the type $t\longmapsto J(\Om_t)$, where $\Om_t$ is a parallel body of $\Om$ and $J$ a given shape functional, can be challenging, especially when the functional is defined via some PDE. For example, we refer to Jerison's paper \cite{zbMATH00966465} where the first variations of the first eigenvalue of the Dirichlet Laplace operator    and the capacity are computed and to the paper \cite{zbMATH05849680} by Colesanti and Fimiani, where the torsional rigidity is treated. 
    \item The main interest of Theorem \ref{th:derivative_cheeger} is that it requires no regularity on the convex $\Omega$. If the set $\Omega$ is sufficiently regular, one could imagine constructing a smooth perturbation vector field $V:\R^n\longrightarrow \R^n$ whose restriction to the boundary is equal to the normal. One then can use the shape derivation result of \cite{parini_saintier} to obtain a formula for the first variation of the function $t\longmapsto h(\Om_t)$ that is consistent with the result of Theorem \ref{th:derivative_cheeger} as explained in Remark \ref{rk:shape_derivative}. \vspace{2mm}
\end{itemize}

\textbf{Outline of the paper:} The paper is organized as follows: In Section \ref{s:notations}, we introduce and recall the notations used in the present paper. Then, in Section \ref{s:proofs}, we present the proofs of the main Theorems \ref{th:monotonicity_cheeger} and \ref{th:derivative_cheeger}. Section \ref{s:discussion} is devoted to a discussion of the results of the latter theorems and some applications: more precisely, we show that the result of Theorem \ref{th:monotonicity_cheeger} does not hold for general open sets, see Proposition \ref{prop:open_monotonicity}, then, we obtain some estimates on the measure of the contact surface of the Cheeger set $\partial C^\Om\cap \partial \Om$, see Proposition \ref{prop:planar_contact} for the planar case and Proposition \ref{prop:contact_surface_any_dim} for higher dimensions, at last, in Proposition \ref{prop:perturbation}, we present a perturbation result for the Cheeger constant of planar convex sets. Finally, in Section  \ref{s:generalization}, we discuss possible generalizations to other functionals such as the first Dirichlet eigenvalue of the Laplace operator. 

\section{Notations and preliminaries}\label{s:notations}
In this section, we introduce and recall the notations used in this paper. 
\begin{itemize}
    \item $\K^n$, with $n\ge 2$, stands for the class of non-empty, bounded and convex subsets of $\R^n$. \vspace{2mm}
    \item For all $n\ge 1$, we denote by $\H^n$ the $n$-dimensional Hausdorff measure. \vspace{2mm}
    \item If $\Omega\in \K^n$, we denote by
    \begin{itemize}
        \item $|\Omega|$ its volume, that is equal to its $n$-dimensional Hausdorff measure $\H^{n}(\Om)$.  
        \item $P(\Omega)$ its perimeter, that is equal to  $\H^{n-1}(\partial \Om)$, the $(n-1)$-dimensional Hausdorff measure of its boundary. 
        \item $r(\Om)$ its inradius that is the radius of the largest ball inscribed in $\Om$. 
        \item $h(\Omega)$ its Cheeger constant defined as follows 
    $$h(\Omega):=\inf_{E\subset \Om} \frac{P(E)}{|E|}.$$
    A set $C^\Om\subset\Om$ that satisfies $h(\Om)= \frac{P(C^\Om)}{|C^\Om|}$ is called \textit{a Cheeger set} of $\Om$. 
        \item $\lambda_1(\Om)$ its first Dirichlet eigenvalue, that is the smallest positive value for which the system 
        \begin{equation*}
\begin{cases}
    - \Delta u&=\ \ \lambda_1(\Om) u \qquad\mbox{in }  \Omega,  \\
      \ \ \ \ \ u &=\ \ \ \ \   0\ \ \ \ \ \qquad\mbox{on } \partial\Omega,
\end{cases}
\end{equation*}
admits non trivial solutions. 
    \end{itemize}\vspace{2mm}
    \item If the boundary of $\Om\in\K^n$ is $C^{1,1}$, then the normal to its boundary $n:\partial \Om \longrightarrow \mathbb{S}^{n-1}$ is a Lipschitz function and the mean curvature at almost every boundary point $x\in \partial \Om$ can be defined as follows $\kappa:= \text{div} (n(x))$. We can then define its total mean curvature as 
    $$K(\Om):=\int_{\partial \Om} \kappa d\H^{n-1}.$$
    \item Let $\Om,\Om'\in \K^n$, we define the Minkowski sum of $\Om$ and $\Om'$ as the following (convex) set:  
    $$\Om\oplus \Om':= \{x+y\ |\ x\in\Om\ \text{and}\ y\in\Om'\}.$$
\end{itemize}

\section{Proofs of the main results}\label{s:proofs}
\subsection{Proof of Theorem \ref{th:monotonicity_cheeger}}\label{ss:proof_th_monotonicity}

The proof is presented in four steps: \vspace{2mm}

\textbf{\textit{First step:}} If $\Omega$ is a ball, then, for all $t\in (-r(\Omega),+\infty)$, the parallel body $\Omega_t$ is a ball. Therefore, the function $t\in (-r(\Om),+\infty)\longmapsto \sqrt{|\Om_t|}h(\Om_t)$ is constant. From now on and until the end of the proof, we assume that $\Omega$ is not a ball. 

The constant $\tau_\Om$ defined in Theorem \ref{th:monotonicity_cheeger} is negative, as for every $t>0$, the set $\Om_t$ is not a tangential body. Indeed, since all the boundary points of the parallel set $\Om_t$ are regular (in the sense of Definition \ref{def:formbody}), the form body of $\Om_t$ is given by the unit ball. 

Let us assume that $\Omega\in \K^2$ is a tangential body. Then, by \cite[Lemma 3.1.14]{schneider}, for all $t\in (0,r(\Om))$, $\Om_{-t}$ is homothetic to $\Om$. This implies that $t\longmapsto \sqrt{|\Om_{-t}|}h(\Om_{-t})$ is constant on $(0,r(\Om))$.\vspace{2mm}

\textbf{\textit{Second step:}} Let us now assume that $\Om$ is not a tangential body and let $c\in(0,r(\Om))$. In the present step, we prove that 
\begin{equation}\label{ineq:inner_sets_area}
\forall t\in \left(0,\frac{r(\Om)}{\sqrt{|\Om|}}\right),\ \ \ \ \left|\left(\frac{1}{\sqrt{|\Om|}}\cdot\Om\right)_{-t}\right|> \left|\left(\frac{1}{\sqrt{|\Om_{-c}|}}\cdot\Om_{-c}\right)_{-t}\right|.    
\end{equation}

  We first denote
  $$r_c: = r\left(\frac{1}{\sqrt{|\Om_{-c}|}}\cdot \Om_{-c}\right) = \frac{r(\Om_{-c})}{\sqrt{|\Om_{-c}|}}$$
  and introduce the function 
    $$\phi_{\Om,c}:t\in \left[0,\frac{r(\Om)}{\sqrt{|\Om|}}\right]\longmapsto \left|\left(\frac{1}{\sqrt{|\Om|}}\cdot\Om\right)_{-t}\right| - \left|\left(\frac{1}{\sqrt{|\Om_{-c}|}}\cdot\Om_{-c}\right)_{-t}\right|.$$

By Matheron's inequality \eqref{ineq:matheron}, we have 
$$r_c = \frac{r(\Om_{-c})}{\sqrt{|\Om_{-c}|}} = \frac{r(\Om)-c}{\sqrt{|\Om_{-c}|}}< \frac{r(\Om)-c}{\sqrt{|\Om|}\left(1-\frac{c}{r(\Om)}\right)} = \frac{r(\Om)}{\sqrt{|\Om|}}.$$

Therefore
$$\forall t\in \left[r_c,\frac{r(\Om)}{\sqrt{|\Om|}}\right),\ \ \ \phi_{\Om,c}(t) = \left|\left(\frac{1}{\sqrt{|\Om|}}\cdot\Om\right)_{-t}\right| - 0 > 0.$$

Let us now focus on the interval $[0,r_c]$. By \cite[page 207, formula (30)]{zbMATH03128853}, if $K\in \K^2$, then the function $t\in(-r(K),+\infty)\longmapsto |K_{t}|$ is differentiable and its derivative is given by $t\in(-r(K),+\infty)\longmapsto P(K_{t})$. Thus, the function $\phi_{\Om,c}$ is also differentiable in $(0,r_c)$ and for every $t\in(0,r_c)$, we have 
    \begin{align*}
\phi_{\Om,c}'(t)&= -P\left(\left(\frac{1}{\sqrt{|\Om|}}\cdot\Om\right)_{-t}\right) + P\left(\left(\frac{1}{\sqrt{|\Om_{-c}|}}\cdot\Om_{-c}\right)_{-t}\right)\\
&= -P\left( \frac{1}{\sqrt{|\Om|}}\cdot \Om_{-t\sqrt{|\Om|}} \right) + P\left( \frac{1}{\sqrt{|\Om_{-c}|}}\cdot \Om_{-c-t\sqrt{|\Om_{-c}|}} \right)\\
&= -\frac{1}{\sqrt{|\Om|}}P(\Om_{-t\sqrt{|\Om|}})+\frac{1}{\sqrt{|\Om_{-c}|}}P(\Om_{-c-t\sqrt{|\Om_{-c}|}}) \\
&= \frac{1}{\sqrt{|\Om_{-c}|}}f(c+t\sqrt{|\Om_{-c}|}) - \frac{1}{\sqrt{|\Om|}}f(t\sqrt{|\Om|}),
\end{align*}
where $f:x\in[0,r(\Om)]\longmapsto P(\Om_{-x})$. 

By \cite[Proposition 2.1]{drach}, the function $f$ is concave and thus can be uniformly approximated on $[0,r(\Om)]$ by a sequence $(f_n)$ of $C^\infty$ concave functions, see \cite[Theorem 2]{zbMATH02141364}. Let us then consider the sequence of smooth functions $(g_n)$ defined as follows
$$g_n:t\in \left[0,r_c\right] \longmapsto \frac{1}{\sqrt{|\Om_{-c}|}}f_n(c+t\sqrt{|\Om_{-c}|}) - \frac{1}{\sqrt{|\Om|}}f_n(t\sqrt{|\Om|})$$

We have 
$$\forall t\in \left(0,r_c\right),\ \ \ \ g_n'(t) = f_n'(c+t\sqrt{|\Om_{-c}|}) - f_n'(t\sqrt{|\Om|}).$$

On the other hand, we have for every $t\in [0,r_c]$,
    \begin{align*}
(c+t\sqrt{|\Om_{-c}|}) - t\sqrt{|\Om|}&= c + (\sqrt{|\Om_{-c}|}-\sqrt{|\Om|})\cdot t \\
&\ge  c + (\sqrt{|\Om_{-c}|}-\sqrt{|\Om|})\cdot r_c  \ \ \ \ \text{(because $ |\Om_{-c}|-|\Om| \leq 0$)}\\
&= c + r(\Omega) - c - \sqrt{|\Om|}\cdot r_c\\
&= \sqrt{|\Om|}\left(\frac{r(\Om)}{\sqrt{|\Om|}}-r_c\right)\\
&> 0 \ \ \ \text{(by Matheron's inequality \eqref{ineq:matheron}).}
\end{align*}

Now, since the functions $(f_n)$ are concave, their derivatives $(f'_n)$ are decreasing. Thus, 
$$\forall t\in \left(0,r_c\right),\ \ \ \ g_n'(t) = f_n'(c+t\sqrt{|\Om_{-c}|}) - f_n'(t\sqrt{|\Om|})\leq 0.$$

Therefore, $(g_n)$ is a sequence of decreasing functions uniformly converging to $\phi_{\Om,c}'$ on the interval $[0,r_c]$. The convergence implies that the limit $\phi_{\Om,c}'$ is also decreasing which yields that $\phi_{\Om,c}$ is a concave function on  $[0,r_c]$. Therefore, by the concavity inequality, we have 
$$\forall t\in \left(0,r_c\right],\ \ \ \ \phi_{\Om,c}(t) = \phi_{\Om,c}\left( (1-\frac{t}{r_c})\cdot 0 +\frac{t}{r_c}\cdot r_c\right) \ge   (1-\frac{t}{r_c})\phi_{\Om,c}(0) + \frac{t}{r_c}\phi_{\Om,c}(r_c) = \frac{t}{r_c}\cdot \left|\left(\frac{1}{\sqrt{|\Om|}}\cdot\Om\right)_{-r_c}\right|> 0,$$
because 
$$\phi(0) = \left|\frac{1}{\sqrt{|\Om|}}\cdot\Om\right| - \left|\frac{1}{\sqrt{|\Om_{-c}|}}\cdot\Om_{-c}\right| = 1 - 1 = 0$$
and 
$$\phi_{\Om,c}(r_c) = \left|\left(\frac{1}{\sqrt{|\Om|}}\cdot\Om\right)_{-r_c}\right| - \left|\left(\frac{1}{\sqrt{|\Om_{-c}|}}\cdot\Om_{-c}\right)_{-r_c}\right| = \left|\left(\frac{1}{\sqrt{|\Om|}}\cdot\Om\right)_{-r_c}\right| - 0.$$

Therefore, $\phi_{\Om,c}$ is strictly positive on $\left(0,\frac{r(\Om)}{\sqrt{|\Om|}}\right)$. \vspace{2mm}

\textbf{\textit{Third step:}} We still assume in this step that $\Om\in \K^2$ is not a tangential body. It is well known by \cite[Theorem 1]{algo} that the Cheeger constant of a planar convex body $K$ is given by $h(K) 
 = 1/t_K$, where $t_K$ is the (unique) solution of the equation  $|K_{-t}| = \pi t^2$ on the interval $(0,r(K))$. 
 
  Let $c\in \left(0,\frac{r(\Om)}{\sqrt{|\Om|}}\right)$, we have by \eqref{ineq:inner_sets_area}
   $$\forall t\in \left(0,\frac{r(\Om)}{\sqrt{|\Om|}}\right),\ \ \ \ \left|\left(\frac{1}{\sqrt{|\Om|}}\cdot\Om\right)_{-t}\right|> \left|\left(\frac{1}{\sqrt{|\Om_{-c}|}}\cdot\Om_{-c}\right)_{-t}\right|.$$
   
   Thus, $t_0$, the solution of the equation $\left|\left(\frac{1}{\sqrt{|\Om|}}\cdot\Om\right)_{-t}\right|=\pi t^2$, is larger that $t_c$, the solution of  the equation $\left|\left(\frac{1}{\sqrt{|\Om_{-c}|}}\cdot\Om_{-c}\right)_{-t}\right|=\pi t^2$, see Figure \ref{fig:inner_sets}. Therefore 
   $$h\left(\frac{1}{\sqrt{|\Om|}}\cdot\Om\right)< h\left(\frac{1}{\sqrt{|\Om_{-c}|}}\cdot\Om_{-c}\right),$$
   which can be written by using the scaling property of the Cheeger constant as 
   $$\sqrt{|\Om|}h(\Om)< \sqrt{|\Om_{-c}|}h(\Om_{-c}).$$

      \begin{figure}[th]
    \centering
    \includegraphics[scale=.8]{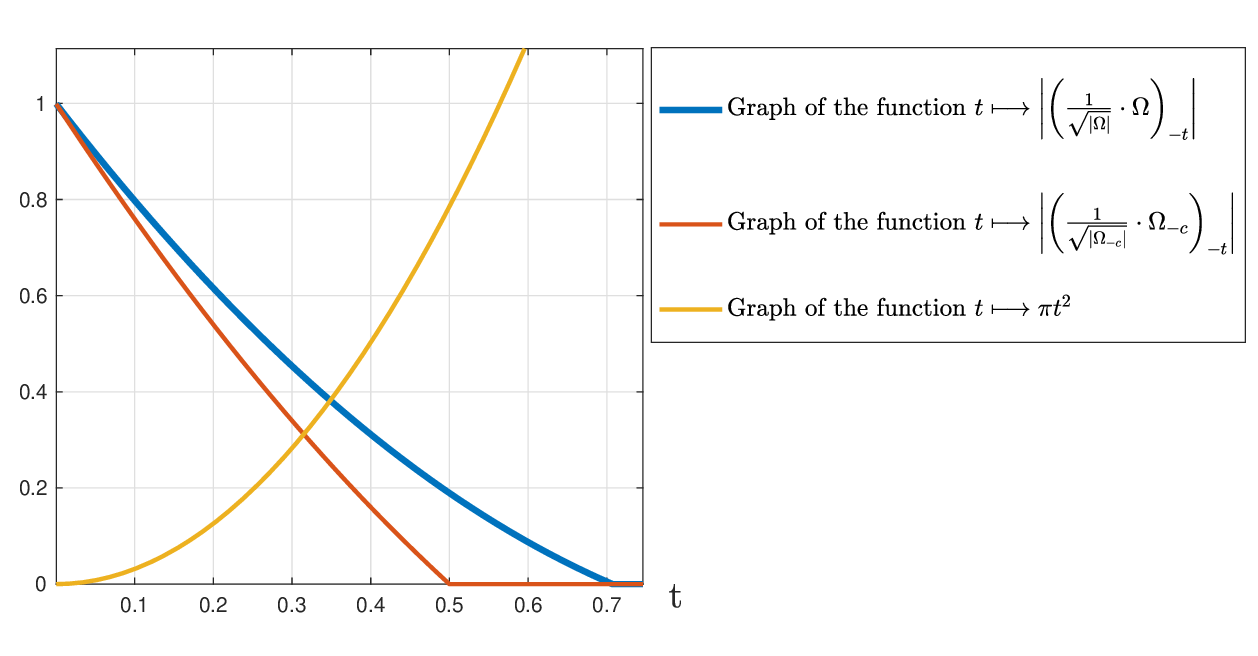}
    \caption{Estimating the area of inner sets allows us to estimate the Cheeger constant.}
    \label{fig:inner_sets}
\end{figure}

\textbf{\textit{Fourth step:}} Now, let $t>0$ and consider $\Om\in\K^2$ which is not a ball but can still be a tangential body. Since $(\Om_{t})_{-t} = \Om$ and $\Om_{t}$ is not a tangential body, we can apply the result of the previous step to $\Om_{t}$ and show that 
$$h\left(\frac{1}{\sqrt{|\Om_t|}}\cdot\Om_t\right)< h\left(\frac{1}{\sqrt{|(\Om_t)_{-t}|}}\cdot(\Om_t)_{-t}\right)= h\left(\frac{1}{\sqrt{|\Om|}}\cdot\Om\right),$$
which is equivalent to 
$$\sqrt{|\Om_t|}h(\Om_t) < \sqrt{|\Om|}h(\Om).$$\vspace{3mm}

These steps allow us to conclude the proof of Theorem \ref{th:monotonicity_cheeger}.

\subsection{Proof of Theorem \ref{th:derivative_cheeger}}

We recall the following Steiner formula \cite[Formula (4.1)]{schneider}
\begin{equation}\label{eq:steiner_volume}
   |\Om_t| = \sum_{i=0}^n \binom{n}{i}W_i(\Om)t^i, 
\end{equation}
where the functionals $W_i$, that are independent from $\Om$, are called the relative quermassintegrals of $\Om$ and they are just a special case of the more general mixed volumes for which we
refer to \cite[Section 5.1]{schneider}. In particular, we have $W_0(\Om) = |\Om|$, $W_1(\Om)=\frac{1}{n}P(\Om)$ and when the boundary of $\Omega$ is sufficiently smooth ($C^{1,1}$),  $W_2(\Om)=\frac{1}{n-1}\int_{\partial \Om}\kappa d\H^{n-1}$, where $\kappa$ is the sum of the principal curvatures of $\partial \Om$.

{We also note that the functionals $W_i$ are special cases of mixed volumes, see \cite[Formula (5.53)]{schneider}). Therefore, they can be written as finite sums of some volume functions as stated in \cite[Formula (5.20)]{schneider}. They are then continuous on the class of convex bodies $\K^n$ with respect to the Hausdorff distance. This continuity result and other interesting properties can be also found in \cite[Chapter 3]{saviour}}.

A formula similar to \eqref{eq:steiner_volume} holds for all quermassintegrals. In particular, we have the following formula for the perimeter 
\begin{equation}\label{eq:steiner_perimeter}
P(\Om_t) =   \sum_{i=0}^{n-1} \binom{n-1}{i}W_{i+1}(\Om)t^i.   
\end{equation}

Let us now prove the differentiability result. Let $t>0$. We use the set $C^\Om_t:= C^\Om+t B_1\subset \Om_t$ as a test set in the definition of the Cheeger constant of $\Om_t$: 
    \begin{align*}
h(\Om_t)-h(\Om)&\leq \frac{P(C^\Om_t)}{|C^\Om_t|}-\frac{P(C^\Om)}{|C^\Om|}\\
&= \frac{P(C^\Om)+K(C^\Om)t+o(t)}{|C^\Om|+P(C^\Om)t + o(t)}-\frac{P(C^\Om)}{|C^\Om|}\\
&= \left(\frac{K(C^\Om)}{|C^\Om|} - \left(\frac{P(C^\Om)}{|C^\Om|}\right)^2\right)\cdot t +o(t)\\
&= \left(\frac{K(C^\Om)}{|C^\Om|} - h(\Om)^2\right)\cdot t +o(t).
\end{align*}

Therefore 
$$\limsup_{t\rightarrow 0^+}\frac{h(\Om_t)-h(\Om)}{t} \leq \frac{K(C^\Om)}{|C^\Om|} - h(\Om)^2.$$

On the other hand, to obtain a lower bound, we want to use the inner set $C^{\Om_t}_{-t}\subset \Om$ as a test set in the definition of the Cheeger constant of $\Om$.

Since $(\Om_t)$ converges with respect to the Hausdorff distance to $\Om$ when $t$ tends to $0$, we have by the continuity of the Cheeger constant stated in \cite[Theorem 1]{reverse_cheeger}
$$\exists t_0>0,\forall t\in [0,t_0],\ \ \ \ h(\Om_t)\leq 2 h(\Om).$$

By \cite[Theorem 1]{uniqueness}, for every $t\in[0,t_0]$, the (unique) Cheeger set $C^{\Om_t}$ is $C^{1,1}$ convex set whose mean curvature $\kappa$ is bounded from above by $h(\Om_t)$ (see \cite[Proposition 2.1]{curvature_cheeger}) and thus by the uniform constant $2h(\Om)$ that only depends on $\Om$. Therefore, by the extension of Blaschke's rolling theorem \cite{zbMATH03117195} to the $C^{1,1}$ case stated in \cite[Corollary 1.13]{zbMATH01299955} combined with \cite[Theorem 3.2.2]{schneider}, we deduce that 
$$\forall t\in [0,2 h(\Om)],\ \ \ \ (C^{\Om_t}_{-t})_t = C^{\Om_t}_{-t}\oplus t B_1 = C^{\Om_t}.$$

We then can write by using Steiner formula \eqref{eq:steiner_perimeter}: 
$$P(C^{\Om_t}) = P(C^{\Om_t}_{-t}\oplus t B_1) = P(C^{\Om_t}_{-t}) + K(C^{\Om_t}_{-t})\cdot t + \sum_{i=2}^{n-1} W_{i+1}(C^{\Om_t}_{-t}) \cdot t^i = P(C^{\Om_t}_{-t}) + K(C^{\Om_t}_{-t})\cdot t + o(t),$$
where we used the continuity of the quermassintegrals {(see \cite[Definition 3.5]{saviour} and Assertion (f) of \cite[Theorem 3.9]{saviour})} and the fact that $(C^{\Om_t}_{-t})$ converges with respect to the Hausdorff distance to $C^\Om$ when $t$ tends to $0^+$ to claim that 
$$\forall i\in \llbracket 2, n-1 \rrbracket,\ \ \ \ W_i(C^{\Om_t}_{-t}) \underset{t\rightarrow 0^+}{\sim} W_i(C^\Om)> 0.$$

By similar arguments, we obtain 
$$|C^{\Om_t}| =  |C^{\Om_t}_{-t}\oplus t B_1| = |C^{\Om_t}_{-t}| + P(C^{\Om_t}_{-t})\cdot t + o(t).$$

We are now in position  to write 
    \begin{align*}
h(\Om_t)-h(\Om)&\ge \frac{P(C^{\Om_t})}{|C^{\Om_t}|}-\frac{P(C^{\Om_t}_{-t})}{|C^{\Om_t}_{-t}|}\\
&= \frac{P((C^{\Om_t}_{-t})_t)}{|(C^{\Om_t}_{-t})_t|}-\frac{P(C^{\Om_t}_{-t})}{|C^{\Om_t}_{-t}|} \\
&= \frac{P(C^{\Om_t}_{-t}) + K(C^{\Om_t}_{-t})\cdot t + o(t)}{|C^{\Om_t}_{-t}| + P(C^{\Om_t}_{-t})\cdot t + o(t)}-\frac{P(C^{\Om_t}_{-t})}{|C^{\Om_t}_{-t}|}\\
&= \frac{P(C^{\Om_t}_{-t})}{|C^{\Om_t}_{-t}|}\cdot\left( \frac{1 + \frac{K(C^{\Om_t}_{-t})}{P(C^{\Om_t}_{-t})}\cdot t + o(t)}{1 + \frac{P(C^{\Om_t}_{-t})}{|C^{\Om_t}_{-t}|}\cdot t + o(t)}-1\right)\\
&= \frac{P(C^{\Om_t}_{-t})}{|C^{\Om_t}_{-t}|}\cdot\left( \frac{\frac{K(C^{\Om_t}_{-t})}{P(C^{\Om_t}_{-t})} - \frac{P(C^{\Om_t}_{-t})}{|C^{\Om_t}_{-t}|}  + o(1)}{1 + \frac{P(C^{\Om_t}_{-t})}{|C^{\Om_t}_{-t}|}\cdot t + o(t)}\right) t.
\end{align*}

Therefore, 
$$\liminf_{t\rightarrow 0^+} \frac{h(\Om_t)-h(\Om)}{t} \ge \lim_{t\rightarrow 0^+} \frac{P(C^{\Om_t}_{-t})}{|C^{\Om_t}_{-t}|}\cdot\left( \frac{\frac{K(C^{\Om_t}_{-t})}{P(C^{\Om_t}_{-t})} - \frac{P(C^{\Om_t}_{-t})}{|C^{\Om_t}_{-t}|}  + o(1)}{1 + \frac{P(C^{\Om_t}_{-t})}{|C^{\Om_t}_{-t}|}\cdot t + o(t)}\right) = \frac{K(C^\Om)}{|C^\Om|} - h(\Om)^2.$$

We then conclude that 
$$\lim_{t\rightarrow 0^+} \frac{h(\Om_t)-h(\Om)}{t} = \frac{K(C^\Om)}{|C^\Om|} - h(\Om)^2.$$

The limit when $t\rightarrow 0^-$ is obtained by similar arguments. 

At last, we can obtain another equivalent formula of the derivative as follows: 
\begin{align*}
\frac{K(C^\Om)}{|C^\Om|} - h(\Om)^2&= \frac{1}{|C^\Om|}\left(\int_{\partial C^\Om}\kappa d\H^{n-1} - P(C^\Om)h(\Om)\right)\\
&= \frac{1}{|C^\Om|}\int_{\partial C^\Om}(\kappa - h(\Om)) d\H^{n-1}\\
&= \frac{1}{|C^\Om|}\int_{\partial C^\Om\cap \partial \Om}(\kappa - h(\Om)) d\H^{n-1},
\end{align*}
where in the last equality, we used that $\kappa = h(\Omega)$ almost everywhere on $\partial C^\Om\cap \Om$. This concludes the proof of Theorem \ref{th:derivative_cheeger}. 

\begin{remark}\label{rk:shape_derivative}
We note that if the convex $\Om$ is sufficiently smooth, at least $C^{1,1}$, in which case the normal to the boundary will be at least Lipschitz, then the differentiation result of Theorem \ref{th:derivative_cheeger} is the same as the one obtained by using the shape derivative of the Cheeger constant provided in \cite{parini_saintier}. Indeed, in this case, one may consider a suitable (Lipschitz) perturbation vector field $V:\R^n\rightarrow \R^n$ that is equal to the normal on the boundary of $\Omega$, i.e., $V=n$ on $\partial \Omega$. By using \cite[Corollary 1.2]{parini_saintier}, we have 
$$\lim_{t\rightarrow 0^+} \frac{h(\Om_t)-h(\Om)}{t}= h'(\Om,V)
= \frac{1}{|C^\Om|}\int_{\partial C^\Om\cap \partial \Om} (\kappa - h(\Om))\langle V,n \rangle d\mathcal{H}^{n-1}
= \frac{1}{|C^\Om|}\int_{\partial C^\Om\cap \partial \Om} (\kappa - h(\Om))d\mathcal{H}^{n-1}.$$   
\end{remark}

\section{Discussion and applications}\label{s:discussion}
In this section, we present some applications of the main theorems of the present paper. We first show that result of Theorem \ref{th:monotonicity_cheeger} does not hold for general open sets, then, we combine the results of Theorems \ref{th:monotonicity_cheeger} and \ref{th:derivative_cheeger} to obtain some estimates of the contact surface of the Cheeger sets. 
\subsection{The non convex case}
In the following, we highlight the crucial importance of the convexity assumption on $\Omega$ as we provide a counterexample where the monotonicity result of Theorem \ref{th:monotonicity_cheeger} fails.{\begin{proposition}\label{prop:open_monotonicity}
    There exists an open set $\Omega\subset \R^2$ such that  for sufficiently small values of $t>0$, we have
$$\sqrt{|\Om_{-t}|}\cdot h(\Om_{-t}) < \sqrt{|\Om|}\cdot h(\Om).$$
\end{proposition}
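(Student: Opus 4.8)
The plan is to exhibit an explicit counterexample built from two disjoint disks of different radii, exploiting the fact that, without convexity, an inner parallel body can shed area far faster than Matheron's inequality \eqref{ineq:matheron} permits, while the smaller disk contributes nothing to the Cheeger constant. Fix $\eps\in(0,1)$ and a point $p\in\R^2$ with $|p|\ge 4$, and set $\Om:=B(0,1)\cup B(p,\eps)$, a disjoint union of two open disks. Since the two components lie at a definite positive distance, for every $s\in(0,\eps)$ any closed ball of radius $s$ contained in $\Om$ lies entirely in one of them, so that
$$\Om_{-s}=B(0,1-s)\cup B(p,\eps-s),$$
again a disjoint union.

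Next I would compute both sides. In the plane $h(B(0,\rho))=2/\rho$ and the disk is its own Cheeger set; moreover, for a disjoint union the mediant inequality $\tfrac{P(E_1)+P(E_2)}{|E_1|+|E_2|}\ge\min\!\big(\tfrac{P(E_1)}{|E_1|},\tfrac{P(E_2)}{|E_2|}\big)$, applied to the decomposition $E=(E\cap A)\sqcup(E\cap B)$ of a competitor (perimeter and measure being additive over the two components), gives $h(A\sqcup B)=\min(h(A),h(B))$. As $\eps<1$ this yields $h(\Om)=2$ and $h(\Om_{-s})=\min\!\big(\tfrac{2}{1-s},\tfrac{2}{\eps-s}\big)=\tfrac{2}{1-s}$ for $0<s<\eps$. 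Combining with $|\Om|=\pi(1+\eps^2)$ and $|\Om_{-s}|=\pi\big((1-s)^2+(\eps-s)^2\big)$, one gets
$$\frac{|\Om_{-s}|\,h(\Om_{-s})^2}{|\Om|\,h(\Om)^2}=\frac{(1-s)^2+(\eps-s)^2}{(1-s)^2(1+\eps^2)}=\frac{1+\big(\tfrac{\eps-s}{1-s}\big)^2}{1+\eps^2}.$$

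Finally it remains to see that this ratio is $<1$ for small $s>0$, i.e. that $(\eps-s)^2<\eps^2(1-s)^2$. Factoring,
$$\eps^2(1-s)^2-(\eps-s)^2=\big(\eps(1-s)-(\eps-s)\big)\big(\eps(1-s)+(\eps-s)\big)=s(1-\eps)\big(2\eps-(1+\eps)s\big)>0$$
for all $0<s<\tfrac{2\eps}{1+\eps}$, in particular for all $s\in(0,\eps)$. Hence $\sqrt{|\Om_{-s}|}\,h(\Om_{-s})<\sqrt{|\Om|}\,h(\Om)$ for every sufficiently small $s=t>0$, which is the assertion. The computation itself presents no real obstacle; the only point worth stressing — and the one genuinely delicate choice — is the geometry of the example: one needs $\Om$ to have a piece that, relative to its own size, loses area at the maximal rate under the inner parallel operation while being Cheeger-irrelevant, which is exactly what convexity forbids. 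Here this is achieved by disconnectedness (allowed, since the statement only asks for an open set); a connected counterexample can be obtained either by joining the two disks with a channel of width $\ll\eps$ and verifying that the induced perturbations of the areas and of $h$ are too small to spoil the strict inequality, or by taking a single domain with a sufficiently narrow neck whose inner parallel body pinches off, reproducing the same mechanism.
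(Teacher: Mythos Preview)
Your proof is correct and takes a genuinely different route from the paper. The paper constructs a \emph{connected} counterexample: a unit square $K=[0,1]^2$ with a thin rectangular tail $R=[1,2]\times[1-\eps,1]$ attached, and argues (citing the structure of Cheeger sets of planar domains from \cite{algo}) that for small $t$ the Cheeger set of $\Om_{-t}$ ignores the tail, so $h(\Om_{-t})=h(K_{-t})$; since $K$ is homothetic to its inner parallels, $|K_{-t}|h(K_{-t})^2$ is constant, and the extra tail area then forces $|\Om_{-t}|h(\Om_{-t})^2<|\Om|h(\Om)^2$ via an explicit first-order expansion in $t$. Your two-disjoint-disks construction isolates the same mechanism---a Cheeger-irrelevant piece that sheds area disproportionately fast---but replaces the ``Cheeger set avoids the tail'' step by the elementary identity $h(A\sqcup B)=\min(h(A),h(B))$, allowing a fully explicit computation with no appeal to any structural result on Cheeger sets. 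The trade-off is that your example is disconnected; the paper's is not, so it shows slightly more (connectedness alone does not restore monotonicity), though you correctly sketch how to recover a connected version via a narrow bridge.
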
}
\begin{proof}
    Consider the tailed set $\Om = K\cup R$, where $K := [0,1]\times [0,1]$, $R:= [1,2]\times[1-\eps,1]$ and $\eps\in (0,1/2)$ to be taken sufficiently small, see Figure \ref{fig:tailed}. 
    
    {For a sufficiently small value of $\eps$, we have that for all $t\in(0,\eps/2)$, the sets $\Omega_{-t}$ and $K_{-t}$ have the same Cheeger sets and thus the same Cheeger constants $h(\Omega_{-t}) = h(K_{-t})$, as a Cheeger set is supposed to minimize the quotient $P(\cdot)/|\cdot|$ and thus would not fill the thin tail as this would imply a much higher perimeter with only a small gain in area, as discussed in \cite[Section 6]{algo}. Therefore, we have, for $t>0$, }
{
\begin{align*}
|\Om_{-t}| h(\Om_{-t})^2 - |\Omega| h(\Omega)^2  &= \big(|K_{-t}|+(\eps-2t)(1-t)\big)\cdot h(K_{-t})^2 - (|K|+\eps)\cdot h(K)^2 \\
&= (\eps-2t)(1-t) h(K_{-t})^2 - \eps h(K)^2\\
&= (\eps-2t)(1-t)\left(\frac{2+\sqrt{\pi}}{1-2t}\right)^2-(2+\sqrt{\pi})\eps\\
&= (2+\sqrt{\pi})^2\cdot(3\eps-2)t+\underset{t\rightarrow 0^+}{o}(t)\\
&<0,
\end{align*}
where for the second equality, we have used the equality $|K_{-t}|h(K_{-t})^2 = |K|h(K)^2$ that holds because the square $K$ is homothetic to its inner sets, and for the third equality, we have used the explicit formula for the Cheeger constant of a square that can be found in \cite[Formula (12)]{algo}. Finally, the inequality is a consequence of the choice of $\eps\in (0,1/2)$. }

We then deduce that for small values of $t>0$, we have 
$$\sqrt{|\Om_{-t}|}\cdot h(\Om_{-t}) < \sqrt{|\Om|}\cdot h(\Om).$$

    \begin{figure}[ht]
    \centering
    \includegraphics[scale=2]{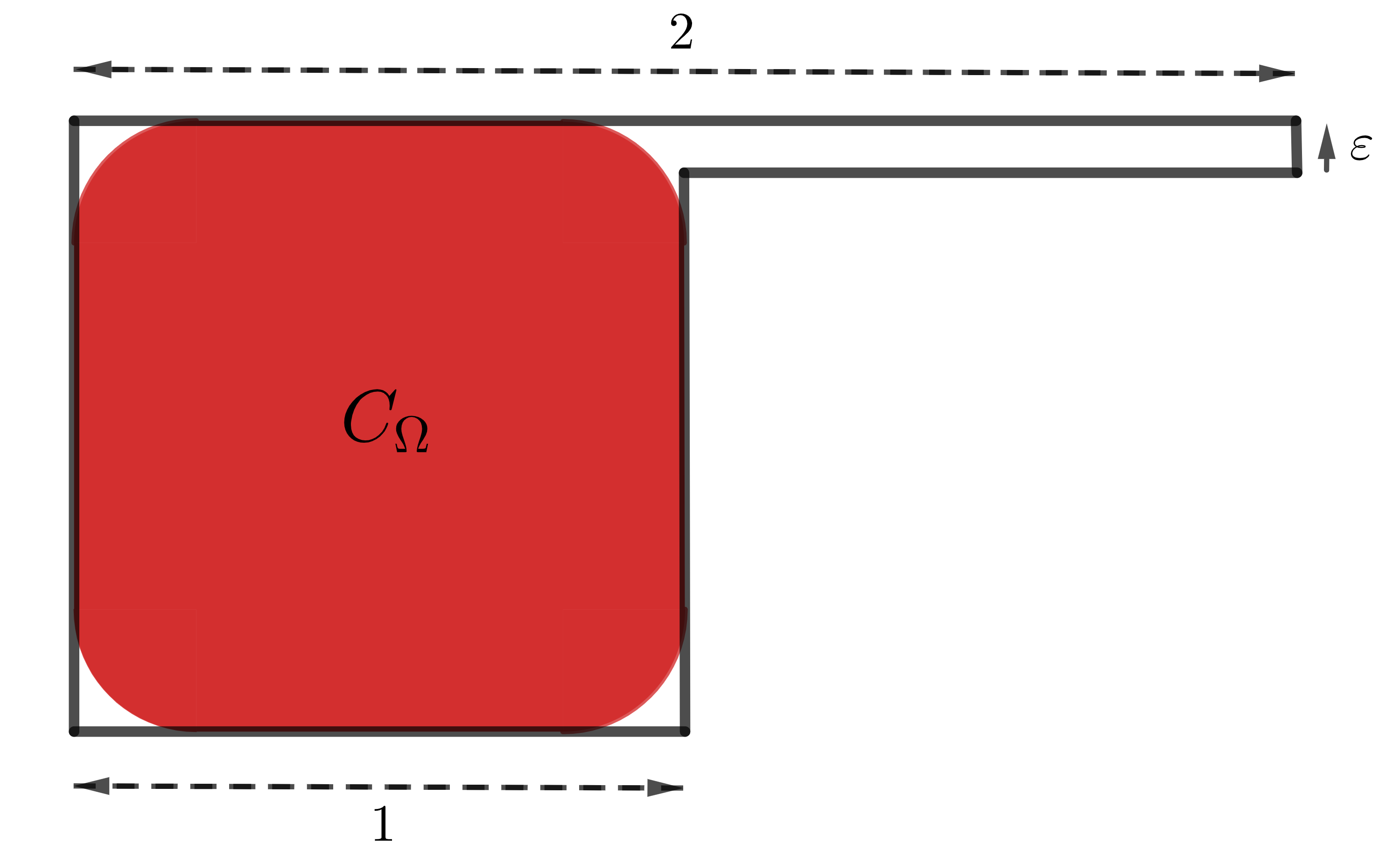}
    \caption{Tailed domain that provides a counterexample for Theorem \ref{th:monotonicity_cheeger} in the non-convex case.}
    \label{fig:tailed}
\end{figure}

\end{proof}

\subsection{Results on the contact surface of the Cheeger sets}
In this section, we apply the results of Theorems \ref{th:monotonicity_cheeger} and \ref{th:derivative_cheeger} to derive estimates on the measure of the contact surface between the Cheeger set \( C^\Omega \) and the boundary of \( \Omega \) for convex bodies. Specifically, Proposition \ref{prop:planar_contact} presents a result for the planar case, while Proposition \ref{prop:contact_surface_any_dim} establishes a weaker result applicable to arbitrary dimensions.
\begin{proposition}\label{prop:planar_contact}
    Let $\Om$ be a planar convex body and $C^\Om$ its (unique) Cheeger set. We have 
\begin{equation}\label{ineq:estimate_perimeter_cheeger}
        \frac{\H^1(\partial C^\Om\cap \partial \Om)}{\H^1(\partial \Om)}\ge \frac{1}{2}\cdot \frac{|C^\Om|}{|\Om|}.
    \end{equation}
\end{proposition}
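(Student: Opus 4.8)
The plan is to combine the two main theorems with the Steiner-type differentiability of the volume functional. Recall from Theorem \ref{th:monotonicity_cheeger} that the function $g:t\longmapsto \sqrt{|\Om_t|}\,h(\Om_t)$ is non-increasing on $(-r(\Om),+\infty)$ (strictly so near $t=0$ unless $\Om$ is a ball, but non-increasing is all we need here). In particular $g$ has a well-defined (one-sided) derivative at $0$ by Theorem \ref{th:derivative_cheeger} together with the classical differentiability of $t\longmapsto|\Om_t|$, and that derivative is $\le 0$. The strategy is to write this single scalar inequality $g'(0)\le 0$ explicitly using the two known derivatives and simplify; the desired estimate should fall out after multiplying through by positive quantities.

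First I would compute $g'(0)$ by the product rule. Write $g(t)=|\Om_t|^{1/2}h(\Om_t)$, so
$$g'(0)=\frac{1}{2}\,\frac{|\Om|'}{\sqrt{|\Om|}}\,h(\Om)+\sqrt{|\Om|}\,\frac{d}{dt}\Big|_{0}h(\Om_t).$$
By the Steiner formula \eqref{eq:steiner_volume} (or directly the fact recalled in the Second step of the proof of Theorem \ref{th:monotonicity_cheeger}, \cite[page 207, formula (30)]{zbMATH03128853}), $\frac{d}{dt}\big|_0|\Om_t|=P(\Om)=\H^1(\partial\Om)$. By Theorem \ref{th:derivative_cheeger},
$$\frac{d}{dt}\Big|_{0}h(\Om_t)=\frac{1}{|C^\Om|}\int_{\partial C^\Om\cap\partial\Om}(\kappa-h(\Om))\,d\H^{1}.$$
Now I would use that on the free boundary portion $\partial C^\Om\cap\Om$ one has $\kappa=h(\Om)$, hence $\int_{\partial C^\Om}\kappa\,d\H^1=\int_{\partial C^\Om\cap\partial\Om}\kappa\,d\H^1+h(\Om)\,\H^1(\partial C^\Om\setminus\partial\Om)$, and also $\int_{\partial C^\Om\cap\partial\Om}h(\Om)\,d\H^1=h(\Om)\,\H^1(\partial C^\Om\cap\partial\Om)$. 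On the contact part $\partial C^\Om\cap\partial\Om$, since $C^\Om\subset\Om$ are both convex and they share this boundary piece, the curvature of $\partial C^\Om$ there equals the curvature of $\partial\Om$, which is a nonnegative measure; thus $\int_{\partial C^\Om\cap\partial\Om}\kappa\,d\H^1\ge 0$. Combining, $\frac{d}{dt}\big|_0 h(\Om_t)\ge \dfrac{-h(\Om)\,\H^1(\partial C^\Om\cap\partial\Om)}{|C^\Om|}$ is the wrong direction, so instead I would keep the exact identity and plug into $g'(0)\le 0$:
$$\frac{1}{2}\frac{P(\Om)}{\sqrt{|\Om|}}h(\Om)+\sqrt{|\Om|}\cdot\frac{1}{|C^\Om|}\Big(\int_{\partial C^\Om\cap\partial\Om}\kappa\,d\H^1-h(\Om)\,\H^1(\partial C^\Om\cap\partial\Om)\Big)\le 0.$$

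Next I would discard the nonnegative term $\int_{\partial C^\Om\cap\partial\Om}\kappa\,d\H^1\ge 0$, which only strengthens the inequality in the useful direction, obtaining
$$\frac{1}{2}\frac{P(\Om)}{\sqrt{|\Om|}}h(\Om)\le \frac{\sqrt{|\Om|}}{|C^\Om|}\,h(\Om)\,\H^1(\partial C^\Om\cap\partial\Om).$$
Since $h(\Om)>0$ and $P(\Om)=\H^1(\partial\Om)$, dividing by $h(\Om)$ and by $\sqrt{|\Om|}$, then rearranging, gives exactly
$$\frac{\H^1(\partial C^\Om\cap\partial\Om)}{\H^1(\partial\Om)}\ge \frac{1}{2}\cdot\frac{|C^\Om|}{|\Om|},$$
which is \eqref{ineq:estimate_perimeter_cheeger}. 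I expect the main obstacle to be justifying rigorously that the term coming from the contact boundary, $\int_{\partial C^\Om\cap\partial\Om}\kappa\,d\H^1$, is nonnegative and has the right interpretation — i.e. that along the shared boundary piece the relevant curvature in the derivative formula of Theorem \ref{th:derivative_cheeger} coincides with the (distributional, nonnegative) curvature of the convex set $\Om$; the planar convex setting makes this transparent, but the bookkeeping between $\partial C^\Om$, $\partial\Om$ and the free boundary where $\kappa=h(\Om)$ must be done carefully, as must the passage from the one-sided derivative inequality to the statement (which is immediate once monotonicity of $g$ is invoked).
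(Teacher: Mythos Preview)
Your proof is correct and follows essentially the same approach as the paper: differentiate the monotone quantity from Theorem~\ref{th:monotonicity_cheeger} at $t=0$, use Theorem~\ref{th:derivative_cheeger} for the derivative of $h(\Om_t)$ and the Steiner formula for that of $|\Om_t|$, then discard the nonnegative curvature integral $\int_{\partial C^\Om\cap\partial\Om}\kappa\,d\H^1$ and rearrange. The only cosmetic difference is that the paper works with $f(t)=|\Om_t|\,h(\Om_t)^2=g(t)^2$ instead of your $g(t)=\sqrt{|\Om_t|}\,h(\Om_t)$; since $f'(0)=2g(0)g'(0)$ and $g(0)>0$, the two sign conditions are equivalent. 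Your concern about the nonnegativity of the curvature term is dispatched in the paper in one line by appealing to convexity (of $\Om$, hence of $C^\Om$), exactly as you anticipated.
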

\begin{proof}
    Let us consider the function $f:t\longmapsto |\Om_{t}|h(\Om_{t})^2$. We  have 
    $$\frac{d}{dt}\Big|_{t=0} |\Om_t| = P(\Om),$$
    and by Theorem \ref{th:derivative_cheeger}
    $$\frac{d}{dt}\Big|_{t=0} h(\Om_t) = \frac{1}{|C^\Om|}\int_{\partial C^\Om\cap \partial \Om} (\kappa - h(\Om)) d\H^{1}.$$
    We then have 
    \begin{align*}
f'(0) &=P(\Om)\cdot h(\Om)^2  + 2|\Om|\cdot\left(\frac{1}{|C^\Om|}\int_{\partial C^\Om\cap \partial \Om} (\kappa - h(\Om)) d\H^{1}\right)h(\Om)  \\
&= \left(P(\Om) + \frac{2|\Om|}{|C^\Om|h(\Om)}\int_{\partial C^\Om\cap \partial \Om} \kappa d\H^{1} - \frac{2|\Om|}{|C^\Om|}|\partial C^\Om\cap \partial \Om|\right) h(\Om)^2\\
&= \left(P(\Om) + \frac{2|\Om|}{P(C^\Om)}\int_{\partial C^\Om\cap \partial \Om} \kappa d\H^{1} -\frac{2|\Om|}{|C^\Om|}|\partial C^\Om\cap \partial \Om|\right) h(\Om)^2,
\end{align*}
where we used $h(\Om) = \frac{P(C^\Om)}{|C^\Om|}$ in the last equality. 

By Theorem \ref{th:monotonicity_cheeger}, we have $f'(0)\leq 0$, which is equivalent to 
$$\frac{2|\Om|}{|C^\Om|}\H^1(\partial C^\Om\cap \partial \Om) \ge P(\Om) + \frac{2|\Om|}{P(C^\Om)}\int_{\partial C^\Om\cap \partial \Om} \kappa d\H^{1}.$$

Thus, 
$$\frac{\H^1(\partial C^\Om\cap \partial \Om)}{\H^1(\partial \Om)}\ge \frac{1}{2}\cdot \frac{|C^\Om|}{|\Om|} + \frac{1}{h(\Om)P(\Om)}\int_{\partial C^\Om\cap \partial \Om} \kappa d\H^{1}\ge \frac{1}{2}\cdot \frac{|C^\Om|}{|\Om|}.$$

The last inequality is a consequence of the positive sign of the curvature because of the convexity of $\Om$. 
\end{proof}
\begin{remark}
When $\Om\in \K^2$, we can use \eqref{ineq:estimate_perimeter_cheeger} and write 
$$h(\Om) = \frac{P(C^\Om)}{|C^\Om|} = \frac{\H^1(\partial C^\Om\cap \partial \Om) + \H^1(\partial C^\Om\cap \Om)}{|C^\Om|}\ge  \frac{\H^1(\partial C^\Om\cap \partial \Om) }{|C^\Om|}\ge \frac{1}{2}\cdot\frac{P(\Om)}{|\Om|},$$
which provides an alternative proof (in the planar case) of \cite[Corollary
5.2]{zbMATH07163715}.
\end{remark}

{In higher dimensions, we establish an estimate for the measure of the contact surface \( \partial C^\Omega \cap \partial \Omega \). The proof combines Theorem \ref{th:derivative_cheeger} with the following lemma, which provides a Brunn--Minkowski-type inequality for the Cheeger constant. This result follows directly from the analogous inequality for the first eigenvalue of the \( p \)-Laplacian with Dirichlet boundary conditions \cite{zbMATH05018937}, obtained in the limit as \( p \) approaches \( 1^+ \).
\begin{lemma}
Let $\Om,\Om'\in\K^n$. We have 
  \begin{equation}\label{ineq:brunn_cheeger}
    h(\Om\oplus \Om')^{-1} \ge h(\Om)^{-1} + h(\Om')^{-1}
\end{equation}
\end{lemma}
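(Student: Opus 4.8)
The plan is to deduce \eqref{ineq:brunn_cheeger} by letting $p\to 1^+$ in the Brunn--Minkowski inequality for the first Dirichlet eigenvalue $\lambda_p$ of the $p$-Laplacian. Recall that for a nonempty bounded open set $D\subset\R^n$ and $p\in(1,\infty)$,
$$\lambda_p(D):=\inf\left\{\frac{\int_D|\nabla u|^p\,dx}{\int_D|u|^p\,dx}\ :\ u\in W_0^{1,p}(D)\setminus\{0\}\right\},$$
so that $\lambda_p$ is homogeneous of degree $-p$ under dilations, and hence $\lambda_p(D)^{-1/p}$ is $1$-homogeneous, exactly like $h(D)^{-1}$. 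This matching of homogeneities is what makes the passage to the limit natural.

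First I would invoke the Brunn--Minkowski inequality for $\lambda_p$ from \cite{zbMATH05018937}: for all $\Om,\Om'\in\K^n$ and all $p\in(1,\infty)$,
$$\lambda_p(\Om\oplus\Om')^{-1/p}\ \ge\ \lambda_p(\Om)^{-1/p}+\lambda_p(\Om')^{-1/p},$$
which is legitimate since $\Om\oplus\Om'$ is again a bounded, nonempty convex body. Next I would use the classical fact that $\lambda_p(D)^{1/p}\to h(D)$ as $p\to 1^+$ for every bounded open set $D$ with Lipschitz boundary (see \cite{MR2832554} and the references therein); in particular this applies to every $D\in\K^n$, since convex bodies have Lipschitz boundary, and hence also to $\Om\oplus\Om'$. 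Equivalently, $\lambda_p(D)^{-1/p}\to h(D)^{-1}$ for each $D\in\{\Om,\Om',\Om\oplus\Om'\}$. Passing to the limit $p\to 1^+$ in the displayed inequality then yields \eqref{ineq:brunn_cheeger}.

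The only point that requires genuine care — and the main (mild) obstacle — is the convergence $\lambda_p(\cdot)^{1/p}\to h(\cdot)$, which must be applied simultaneously to all three bodies $\Om$, $\Om'$ and $\Om\oplus\Om'$; since all three are convex bodies, this is covered by the standard statement and no extra regularity or approximation argument is needed. Everything else is a routine pointwise limit of a valid inequality, so I expect no further difficulty.
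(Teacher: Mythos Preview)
Your proposal is correct and follows essentially the same route as the paper: apply the Brunn--Minkowski inequality for $\lambda_p$ from \cite{zbMATH05018937} and let $p\to 1^+$, using $\lambda_p^{1/p}\to h$. The one refinement in the paper is that Theorem~1.1 of \cite{zbMATH05018937} is stated for convex bodies with $C^2$ boundary, so the paper first approximates $\Om,\Om'$ by $C^2$ convex bodies, passes to the limit $p\to1^+$, and then lets the approximants converge using the Hausdorff continuity of $h$ from \cite{reverse_cheeger}; your claim that ``no extra regularity or approximation argument is needed'' should be adjusted accordingly, but the step is routine.
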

\begin{proof}
    Let $\Om,\Om'\in\K^n$. We consider two sequences $(\Om_k)_{k\in\N}$ and $(\Om'_k)_{k\in\N}$ of convex sets with $C^2$ boundaries that respectively converge to $\Om$ and $\Om'$ with respect to the Hausdorff distance. By \cite[Theorem 1.1]{zbMATH05018937}, we have for every $k\in \N$ and every $p>1$
    $$  \lambda_{1,p}(\Om_k\oplus \Om'_k)^{-1/p} \ge \lambda_{1,p}(\Om_k)^{-1/p} + \lambda_{1,p}(\Om'_k)^{-1/p},$$
    where $\lambda_{1,p}$ stands for the first eigenvalue of the $p$-Laplace Operator with Dirichlet boundary condition.\\
    It is well-established that the first eigenvalue of the $p$-Laplace operator with Dirichlet boundary condition of a convex set converges when $p$ tends to $1^+$ to the Cheeger constant, see for example \cite[Corollary 6]{zbMATH05053666}. We then have for all $k\in\N$
    $$    h(\Om_k\oplus \Om'_k)^{-1} \ge h(\Om_k)^{-1} + h(\Om'_k)^{-1}.$$
    The proof is then concluded by tending $k$ to $+\infty$ and using the continuity of the Cheeger constant with respect to the Hausdorff distance, see \cite[Proposition 3.1]{reverse_cheeger}. 
\end{proof}}

Let us now state our result on the contact surface of the Cheeger sets of convex bodies in arbitrary dimensions. 
\begin{proposition}\label{prop:contact_surface_any_dim}
    Let $\Om\in \K^n$, we have 
    \begin{equation}\label{ineq:contactt}
        \H^{n-1}(\partial C^\Om \cap \partial \Omega)\ge \frac{1}{n}\cdot \H^{n-1}(\partial C^\Om).
    \end{equation}
\end{proposition}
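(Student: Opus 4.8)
The plan is to use the scaling relation $h(\gamma\Omega)=h(\Omega)/\gamma$ together with the Brunn--Minkowski-type inequality \eqref{ineq:brunn_cheeger} to extract information about the derivative of $t\mapsto h(\Omega_t)$ at $t=0$, and then to feed this into the derivative formula of Theorem \ref{th:derivative_cheeger}. First I would apply \eqref{ineq:brunn_cheeger} with $\Omega'=tB_1$ (the ball of radius $t$), noting that $\Omega\oplus tB_1=\Omega_t$ and $h(tB_1)=h(B_1)/t=n/t$ (since $h(B_1)=n$ in $\R^n$). This gives
$$h(\Omega_t)^{-1}\ge h(\Omega)^{-1}+\frac{t}{n}\qquad\text{for all }t\ge 0.$$
Since both sides agree at $t=0$, dividing by $t$ and letting $t\to 0^+$ yields
$$\frac{d}{dt}\Big|_{t=0^+}\big(h(\Omega_t)^{-1}\big)\ge \frac{1}{n},\qquad\text{i.e.}\qquad -\frac{f'(0)}{h(\Omega)^2}\ge \frac{1}{n},$$
where $f(t):=h(\Omega_t)$; here I use that $f$ is differentiable at $0$ by Theorem \ref{th:derivative_cheeger}, so $t\mapsto h(\Omega_t)^{-1}$ is differentiable at $0$ with derivative $-f'(0)/f(0)^2$. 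Hence $f'(0)\le -h(\Omega)^2/n$.

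Next I would invoke the explicit formula from Theorem \ref{th:derivative_cheeger},
$$f'(0)=\frac{K(C^\Omega)}{|C^\Omega|}-h(\Omega)^2=\frac{1}{|C^\Omega|}\int_{\partial C^\Omega\cap\partial\Omega}(\kappa-h(\Omega))\,d\H^{n-1},$$
and combine it with the bound $f'(0)\le -h(\Omega)^2/n$ just obtained. This gives
$$\frac{1}{|C^\Omega|}\int_{\partial C^\Omega\cap\partial\Omega}(\kappa-h(\Omega))\,d\H^{n-1}\le -\frac{h(\Omega)^2}{n},$$
equivalently, using $h(\Omega)=P(C^\Omega)/|C^\Omega|$ and multiplying through by $|C^\Omega|$,
$$\int_{\partial C^\Omega\cap\partial\Omega}\kappa\,d\H^{n-1}\le h(\Omega)\,\H^{n-1}(\partial C^\Omega\cap\partial\Omega)-\frac{P(C^\Omega)h(\Omega)}{n}.$$
Now I would drop the (nonnegative, by convexity of $\Omega$) curvature integral on the left, obtaining $0\le h(\Omega)\,\H^{n-1}(\partial C^\Omega\cap\partial\Omega)-\tfrac{1}{n}P(C^\Omega)h(\Omega)$, and divide by $h(\Omega)>0$ to conclude $\H^{n-1}(\partial C^\Omega\cap\partial\Omega)\ge \tfrac1n P(C^\Omega)=\tfrac1n\H^{n-1}(\partial C^\Omega)$, which is \eqref{ineq:contactt}.

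The main obstacle, and the point I would be most careful about, is justifying that the one-sided difference quotient bound passes correctly to the derivative: one needs that $h(\Omega_t)^{-1}-h(\Omega)^{-1}\ge t/n$ for $t>0$ with equality at $t=0$, so that the right derivative of $t\mapsto h(\Omega_t)^{-1}$ is at least $1/n$, and then that this right derivative equals $-f'(0)/h(\Omega)^2$ by Theorem \ref{th:derivative_cheeger} (which gives genuine differentiability, not merely one-sided). A minor secondary point is confirming $h(B_1)=n$ and $h(tB_1)=n/t$ in $\R^n$ — this is standard ($B_1$ is its own Cheeger set, with $P(B_1)/|B_1|=n\omega_n/\omega_n=n$). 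Everything else is bookkeeping with the already-established formulas, plus the observation that the curvature term is nonnegative for convex $\Omega$, exactly as in the proof of Proposition \ref{prop:planar_contact}.
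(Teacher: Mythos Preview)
Your proposal is correct and follows essentially the same route as the paper: apply the Brunn--Minkowski inequality \eqref{ineq:brunn_cheeger} with $\Omega'=tB_1$ to obtain $h(\Omega_t)^{-1}\ge h(\Omega)^{-1}+t/n$, differentiate at $t=0$ using Theorem~\ref{th:derivative_cheeger}, rearrange with $h(\Omega)=P(C^\Omega)/|C^\Omega|$, and drop the nonnegative curvature integral. The only cosmetic difference is that the paper phrases the derivative step as ``$\phi(t):=h(\Omega_t)^{-1}-h(\Omega)^{-1}-t/n\ge 0$ with $\phi(0)=0$, hence $\phi'(0)\ge 0$'', whereas you write the equivalent bound $f'(0)\le -h(\Omega)^2/n$ directly.
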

\begin{proof}
By the Brunn--Minkowski inequality \eqref{ineq:brunn_cheeger} applied {to} $\Omega$ and the unit ball $B_1$, we have for every $t\ge 0$,
$$h(\Om_t)^{-1}= h(\Om\oplus tB_1)^{-1}\ge  h(\Om)^{-1}+h(tB_1)^{-1} = h(\Om)^{-1} + \frac{t}{n}.$$
Thus, the function $\phi:t\ge 0 \longmapsto h(\Om_t)^{-1}-h(\Om)^{-1} - \frac{t}{n}$ is positive on $\R^+$ and $\phi(0)=0$, which yields $\phi'(0)\ge 0$. Then, by using the differentiation result of Theorem \ref{th:derivative_cheeger}, the latter inequality can be written as follows
$$\frac{1}{|C^\Om|}\int_{\partial C^\Om \cap \partial \Omega} (h(\Om)-\kappa)d\H^{n-1}\ge \frac{1}{n} h(\Om)^2 = \frac{1}{n}\cdot\frac{P(C^\Om)}{|C^\Om|}\cdot h(\Om).$$
This implies that 
$$\H^{n-1}(\partial C^\Om \cap \partial \Omega)-\frac{1}{n}\cdot \H^{n-1}(\partial C^\Om)\ge \frac{1}{h(\Om)}\int_{\partial C^\Om \cap \partial \Omega} \kappa d\H^{n-1}\ge 0,$$
where the last inequality is a consequence of the positivity of the mean curvature $\kappa$ of the boundary of the convex set $\Om$. 
\end{proof}
\begin{remark}
    Notably, in the planar case ($n=2$), inequality \eqref{ineq:contactt} can be obtained by the (stronger) inequality \eqref{ineq:estimate_perimeter_cheeger}. Indeed, we have for $\Om\in\K^2$
    $$\frac{\H^{1}(\partial C^\Om\cap \partial \Om)}{\H^{1}(\partial C^\Om)}\ge \frac{1}{2}\cdot\frac{|C^\Om|}{|\Om|}\cdot\frac{P(\Om)}{P(C^\Om)}= \frac{1}{2}\cdot \frac{\frac{P(\Om)}{|\Om|}}{h(\Om)}\ge \frac{1}{2}.$$
\end{remark}

{We believe that the result of Theorem \ref{th:monotonicity_cheeger} holds for higher dimensions and that it can be combined with Theorem \ref{th:derivative_cheeger} to provide a better estimate for the measure of the contact surface than the one stated in Proposition \ref{prop:contact_surface_any_dim}. We then enounce the following conjecture: 
\begin{conjecture}
    For every $\Om\in\K^n$, the function $t\longmapsto |\Om_t|^{\frac{1}{n}}h(\Om_t)$ is monotonically decreasing on $(-r(\Om),+\infty)$. As a consequence, we have 
$$    \frac{\H^{n-1}(\partial C^\Om\cap \partial \Om)}{\H^{n-1}(\partial \Om)}\ge \frac{1}{n}\cdot \frac{|C^\Om|}{|\Om|}.$$
\end{conjecture}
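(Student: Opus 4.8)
The stated consequence is the $t=0$ instance of the monotonicity: if $t\mapsto g_\Om(t):=|\Om_t|^{\frac1n}h(\Om_t)$ is nonincreasing, then $g_\Om'(0)\le 0$, and expanding this using $\tfrac{d}{dt}\big|_0|\Om_t|=P(\Om)$ together with the formula of Theorem \ref{th:derivative_cheeger} for $\tfrac{d}{dt}\big|_0 h(\Om_t)$ runs parallel to the computation in the proof of Proposition \ref{prop:planar_contact} and yields, after dividing by $|\Om|^{\frac1n}h(\Om)>0$ and using $|C^\Om|h(\Om)=P(C^\Om)$ and $\kappa\ge 0$, the contact bound $\frac{\H^{n-1}(\partial C^\Om\cap\partial\Om)}{\H^{n-1}(\partial\Om)}\ge\frac1n\cdot\frac{|C^\Om|}{|\Om|}$. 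So the whole task is the monotonicity, and the plan is to imitate the four-step proof of Theorem \ref{th:monotonicity_cheeger} given in Section \ref{ss:proof_th_monotonicity}.

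Steps 1 and 4 should go through unchanged: tangential bodies are handled via the homothety of their inner parallel bodies (\cite[Lemma 3.1.14]{schneider}), and positive parameters are reduced to negative ones through the identity $(\Om_t)_{-t}=\Om$, valid for every convex body. Step 2 --- the volume comparison \eqref{ineq:inner_sets_area}, with $|\cdot|^{\frac1n}$ replacing $\sqrt{\cdot}$ --- should also survive, but it needs a detour: the planar proof invokes the concavity of $t\mapsto P(\Om_{-t})$ from \cite[Proposition 2.1]{drach}, and this \emph{fails} for $n\ge 3$ (for a $2\times 1\times 1$ box, $t\mapsto P(\Om_{-t})$ is the strictly convex quadratic $10-32t+24t^2$). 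The substitute is the concavity of $t\mapsto|\Om_{-t}|^{\frac1n}$, which holds in every dimension as an instance of the Brunn--Minkowski inequality, since $\tfrac12\Om_{-t_1}\oplus\tfrac12\Om_{-t_2}\subseteq\Om_{-(t_1+t_2)/2}$ for convex $\Om$. After normalizing $|\Om|=1$ one checks that \eqref{ineq:inner_sets_area} is equivalent to the elementary inequality $w(t)w(c)>w(c+w(c)t)$ for the concave, decreasing function $w(s):=|\Om_{-s}|^{1/n}$ with $w(0)=1$; writing $u:=1-w$ (convex, $u(0)=0$) this amounts to $u(c+w(c)t)\ge u(c)+w(c)u(t)$, which follows by comparing $u'(c+w(c)\tau)$ with $u'(\tau)$ on $[0,t]$ and using $t<r(\Om)$ together with the bound $u(c)\le c/r(\Om)$ supplied by Matheron's inequality \eqref{ineq:matheron} --- the same inequality pinning down the equality (tangential body) case.

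The difficulty is concentrated in step 3: converting the volume comparison into the Cheeger comparison $h\big(\tfrac{1}{|\Om|^{1/n}}\Om\big)<h\big(\tfrac{1}{|\Om_{-c}|^{1/n}}\Om_{-c}\big)$. In the plane this is immediate from \cite[Theorem 1]{algo}, where $h(K)=1/t_K$ with $t_K$ the root of $|K_{-t}|=\pi t^2$ --- an equation in which \emph{only} the volume of the inner parallel body appears, so a pointwise comparison of volumes transfers straight to a comparison of the roots. For $n\ge 3$ no such formula is available. The best structural input is the Alter--Caselles description of Cheeger sets of convex bodies \cite{uniqueness}: $C^\Om=\Om_{-\rho}\oplus B_\rho$ with $\rho=1/h(\Om)$, where $\rho$ is the root on $(0,r(\Om))$ of the balance relation $\rho\,P(\Om_{-\rho}\oplus B_\rho)=|\Om_{-\rho}\oplus B_\rho|$; expanding both sides with the Steiner formulas \eqref{eq:steiner_volume}--\eqref{eq:steiner_perimeter} turns this into $|\Om_{-\rho}|=\sum_{i=2}^n c_{n,i}\,W_i(\Om_{-\rho})\rho^i$ for explicit constants $c_{n,i}$ (collapsing to $|\Om_{-\rho}|=\pi\rho^2$ when $n=2$). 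To push the comparison through, one would therefore need, \emph{in addition} to the volume comparison, a monotonicity statement for the rescaled intermediate quermassintegrals $W_i(\Om_{-\rho})$, $2\le i\le n$, under the passage $\Om\leadsto\Om_{-c}$ followed by renormalization. I expect this --- finding the right replacement for the one-variable characterization of \cite{algo}, or equivalently controlling the higher quermassintegrals of inner parallel bodies --- to be \textbf{the main obstacle}.

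An alternative would be the infinitesimal route. After upgrading Theorem \ref{th:derivative_cheeger} to differentiability of $t\mapsto h(\Om_t)$ at every parameter $t_0\in(-r(\Om),+\infty)$ (a routine rerun of the same argument with $\Om_{t_0}$ in place of $\Om$), the monotonicity of $g_\Om$ becomes equivalent to the pointwise inequality $\frac1n\frac{P(\Om)}{|\Om|}+\frac{K(C^\Om)}{P(C^\Om)}\le h(\Om)=\frac{P(C^\Om)}{|C^\Om|}$ for every $\Om\in\K^n$. The natural tools here are that both $t\mapsto|\Om_t|^{\frac1n}$ and $t\mapsto h(\Om_t)^{-1}$ are nonnegative, nondecreasing and concave --- the second by applying \eqref{ineq:brunn_cheeger} to $\Om\oplus\tfrac{t_1+t_2}{2}B_1=\tfrac12(\Om\oplus t_1B_1)\oplus\tfrac12(\Om\oplus t_2B_1)$ --- that $g_\Om$ is scale invariant, that $g_\Om(t)\to n|B_1|^{1/n}$ as $t\to+\infty$, and that $g_\Om\ge n|B_1|^{1/n}$ by the Cheeger isoperimetric inequality. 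But concavity of the numerator and denominator alone does not force a quotient to be monotone, so this route, too, requires genuinely new information: sharp control on the mean curvature of $\partial C^\Om$ along the contact set $\partial C^\Om\cap\partial\Om$ --- again the higher-dimensional analogue of the characterization from \cite{algo} that is missing.
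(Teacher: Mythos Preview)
The statement is labelled a \emph{Conjecture} in the paper, and the paper offers no proof; it only remarks that the planar argument breaks down for $n\ge 3$ because the explicit characterization of Cheeger sets from \cite{algo} is unavailable. Your write-up is therefore not to be compared against a proof in the paper --- there is none --- and you rightly do not claim to have one: you locate the gap precisely at step~3, which matches the paper's own diagnosis. Your derivation of the contact-surface bound from the (conjectural) monotonicity is correct and parallels the proof of Proposition~\ref{prop:planar_contact}.

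Two remarks on the details. Your proposed rescue of step~2 via the concavity of $s\mapsto|\Om_{-s}|^{1/n}$ (Brunn--Minkowski), replacing the planar-only concavity of $s\mapsto P(\Om_{-s})$, is a genuine addition not present in the paper, and the sketch you give is essentially sound (the \emph{strict} inequality for non-tangential $\Om$ needs a little extra care, but the strict case of Matheron's inequality supplies what is needed). On the other hand, your attribution to \cite{uniqueness} of the formula $C^\Om=\Om_{-\rho}\oplus B_\rho$ in all dimensions is not warranted: Alter--Caselles establish uniqueness, convexity and $C^{1,1}$ regularity of $C^\Om$, but the identity $C^\Om=\Om_{-\rho}\oplus B_\rho$ is the planar result of \cite{algo}, and --- as the present paper explicitly states --- no analogous characterization is known for $n\ge 3$. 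Hence the ``balance relation'' you expand via Steiner's formulas is not actually available as a starting point, and step~3 is, if anything, further out of reach than your discussion suggests.
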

\begin{remark}
    We note that a similar conjecture is stated and discussed for the first Dirichlet eigenvalue of the Laplacian in Section \ref{ss:dirichlet}. 
\end{remark}}

\begin{remark}
    The results obtained in the present section support some findings of \cite{contact_surface} where the authors present a fine study of dimensional lower bounds for contact surfaces of Cheeger sets and show that the size of the contact surface $\partial C^\Omega\cap \Omega$ is strongly related to the regularity of $\partial \Omega$. 
\end{remark}

\subsection{Perturbation results for the Cheeger constant}
In the following, we prove that one can always locally perturb a planar convex set in such a way to increase or decrease (when not the ball) its Cheeger constant under area and convexity constrains. Such properties are of crucial importance for the study of the so called Blaschke--Santal\'o diagrams, see for example \cite[Lemma 6]{ftouhi:hal-03646758} and \cite[Lemma 3.5]{ftouhi_siam}.

Before stating the obtained result, let us introduce the following definition: 
\begin{definition}
    A set $\Om_0\in \K^n$ is called local minimizer (resp. local maximizer) of a shape functional $J$ if there exists $\eps>0$ such that for all $\Om\in \K^2$ such that $d^H(\Om,\Om_0)\leq \eps$, where $d^H$ stands for the Hausdorff distance, we have 
    $$J(\Om)\ge J(\Om_0)\ \ \ \text{(resp. $J(\Om)\leq J(\Om_0)$)}.$$
\end{definition}

Let us now present the result of this section:
\begin{proposition}\label{prop:perturbation}
In the planar case, we have the following assertions:  
\begin{itemize}
    \item The balls are the only local minimizers of the Cheeger constant under area and convexity constrains.  
    \item There is no local maximizer of the Cheeger constant under area and convexity constraints. 
\end{itemize}
\end{proposition}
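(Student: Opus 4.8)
The plan is to use the monotonicity and differentiability results already established, together with the scaling property of the Cheeger constant, to analyze how $\sqrt{|\Om|}\,h(\Om)$ behaves under the parallel-body perturbation. The key observation is that the parallel bodies $\Om_t$ provide an explicit one-parameter family of convex perturbations of $\Om$ that converge to $\Om$ in the Hausdorff distance as $t\to 0$, and that $|\Om_t|$ varies in a controlled (smooth, strictly increasing) way near $t=0$. Since we want perturbations under an \emph{area constraint}, the natural move is to renormalize: for $t$ near $0$ set $\widetilde\Om_t:=\bigl(|\Om|/|\Om_t|\bigr)^{1/2}\cdot\Om_t$, which is convex, has the same area as $\Om$, and satisfies $h(\widetilde\Om_t)=\bigl(|\Om_t|/|\Om|\bigr)^{1/2}h(\Om_t)=|\Om|^{-1/2}\sqrt{|\Om_t|}\,h(\Om_t)$. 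Thus $h(\widetilde\Om_t)$ is, up to the fixed positive constant $|\Om|^{-1/2}$, exactly the scale-invariant functional $\sqrt{|\cdot|}\,h(\cdot)$ evaluated along the parallel flow.

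First I would treat the local-maximizer claim. By Theorem \ref{th:monotonicity_cheeger}, if $\Om\in\K^2$ is not a ball, then $t\longmapsto\sqrt{|\Om_t|}\,h(\Om_t)$ is strictly decreasing on a neighborhood of $0$ (since $0>\tau_\Om$ whenever $\Om$ is not a ball — indeed $\tau_\Om\le 0$ and for $t>0$, $\Om_t$ is never a tangential body, so $0$ lies in the strictly decreasing regime unless $\Om$ itself is on the constant part, which forces $\Om$ to be a tangential body and hence, combined with $t>0$ already, still gives strict decrease just past any $t_0$; in all cases there exist area-preserving convex competitors arbitrarily close to $\Om$ with strictly larger $h$). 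Concretely, for small $t<0$ we get $h(\widetilde\Om_t)>h(\Om)$ with $|\widetilde\Om_t|=|\Om|$ and $d^H(\widetilde\Om_t,\Om)\to 0$, so $\Om$ cannot be a local maximizer. If $\Om$ \emph{is} a ball $B$, then the classical fact that among convex (indeed all) sets of given area the ball \emph{minimizes} $h$ — equivalently $\sqrt{|\Om|}\,h(\Om)\ge\sqrt{\pi}\,\cdot 2/\sqrt\pi=2\sqrt\pi$ with equality iff $\Om$ is a ball — shows the ball is a global minimizer, not a maximizer, and any nearby non-ball convex body has strictly larger $h$, so it is not a local maximizer either. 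This disposes of the second bullet.

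Next I would prove the first bullet. That a ball \emph{is} a local (indeed global) minimizer under the area constraint is again the isoperimetric-type inequality $h(\Om)\ge h(B)$ for $|\Om|=|B|$, which is standard (it follows, e.g., from the fact that the Cheeger set satisfies $h(\Om)=P(C^\Om)/|C^\Om|$ together with the classical isoperimetric inequality and $|C^\Om|\le|\Om|$, or directly from the characterization in \cite{algo}); strictness of this inequality for non-balls gives that the ball is even a strict local minimizer. For the converse — that \emph{only} balls are local minimizers — suppose $\Om_0\in\K^2$ is a local minimizer that is not a ball. Then by Theorem \ref{th:monotonicity_cheeger} applied to $\Om_0$, the function $t\longmapsto\sqrt{|(\Om_0)_t|}\,h((\Om_0)_t)$ is strictly decreasing through $t=0$, so for small $t>0$ the renormalized body $\widetilde{(\Om_0)}_t$ is a convex, area-preserving competitor with $d^H(\widetilde{(\Om_0)}_t,\Om_0)\to 0$ and $h(\widetilde{(\Om_0)}_t)<h(\Om_0)$, contradicting local minimality. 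One subtlety: Theorem \ref{th:monotonicity_cheeger} gives strict decrease only on $(\tau_{\Om_0},+\infty)$, and if $\Om_0$ happens to be a tangential body then $\tau_{\Om_0}=0$ and $t=0$ is the left endpoint of the strict-decrease interval — but since $(\tau_{\Om_0},+\infty)$ is open on the right of $0$, taking $t>0$ still lands in the strictly decreasing region, so the argument goes through; if $\Om_0$ is not a tangential body then $\tau_{\Om_0}<0$ and there is strict decrease on both sides of $0$. Either way one produces the desired competitor.

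The main obstacle I expect is making the convergence $d^H(\widetilde\Om_t,\Om)\to 0$ and the strict inequality fully rigorous uniformly — i.e. verifying that the rescaling factor $\bigl(|\Om|/|\Om_t|\bigr)^{1/2}\to 1$ (immediate from continuity of $t\mapsto|\Om_t|$, the Steiner formula \eqref{eq:steiner_volume}), that rescaling by a factor tending to $1$ is Hausdorff-continuous on bounded convex sets, and — most delicately — pinning down which "regime" of Theorem \ref{th:monotonicity_cheeger} the point $t=0$ sits in so that strict monotonicity is genuinely available on at least one side. The case analysis (ball vs.\ non-ball tangential body vs.\ generic non-tangential body) is the only real content; everything else is bookkeeping with the scaling identity $h(\gamma\Om)=h(\Om)/\gamma$ and the already-proven monotonicity.
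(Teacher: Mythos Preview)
Your argument for the first bullet is correct and essentially matches the paper's: for any non-ball $\Om_0$, taking $t>0$ lands in the strictly decreasing region $(\tau_{\Om_0},+\infty)$ of Theorem \ref{th:monotonicity_cheeger}, and the rescaled competitor $\widetilde{(\Om_0)}_t$ has the same area and strictly smaller Cheeger constant.

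The second bullet, however, has a real gap in the tangential-body case. For a (non-ball) tangential body $\Om$ one has $\tau_\Om=0$: by \cite[Lemma 3.1.14]{schneider} every inner parallel set $\Om_{-t}$ is homothetic to $\Om$, so $\Om_t$ is a tangential body for all $t\le 0$, and $\Om_t$ is never a tangential body for $t>0$. Hence Theorem \ref{th:monotonicity_cheeger} says $t\mapsto\sqrt{|\Om_t|}\,h(\Om_t)$ is \emph{constant} on $(-r(\Om),0)$ and strictly \emph{decreasing} on $(0,+\infty)$. Your sentence ``for small $t<0$ we get $h(\widetilde\Om_t)>h(\Om)$'' is therefore false for such $\Om$: for $t<0$ you get equality, and for $t>0$ you get a strictly smaller value. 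The parallel-body flow simply cannot produce an area-preserving convex competitor with strictly larger Cheeger constant when $\Om$ is a tangential body, so your parenthetical assertion ``in all cases there exist \dots\ competitors \dots\ with strictly larger $h$'' is unjustified precisely here.

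This is why the paper treats tangential bodies by a genuinely different argument: it uses the explicit identity $\sqrt{|\Om|}\,h(\Om)=\tfrac{P(\Om)}{2\sqrt{|\Om|}}+\sqrt{\pi}$ valid for tangential bodies (see the discussion below \cite[Theorem 3]{algo}), the fact from \cite[Lemma 3.5]{ftouhi_siam} that the perimeter has no local maximizer under area and convexity constraints, and the general inequality $\sqrt{|K|}\,h(K)\ge \tfrac{P(K)}{2\sqrt{|K|}}+\sqrt{\pi}$ for all $K\in\K^2$ from \cite{ftouhi_contemporary}. You need some ingredient of this type to close the tangential-body case; Theorem \ref{th:monotonicity_cheeger} alone is not enough.
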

\begin{proof}
\begin{itemize}
    \item The first assertion is a direct application of Theorem \ref{th:monotonicity_cheeger}. Indeed, for every $\Om\in\K^2$ which is not a ball, we have $$\forall t>0,\ \ \ \sqrt{|\Om|}h(\Om)>\sqrt{|\Om_{t}|}h(\Om_{t}).$$
    
This shows that if $\Omega$ is not a ball then it cannot be a local minimizer of $h$ under convexity and area constrains. 
\item     The second assertion is trickier, as one has to give a special care for the case of tangential bodies.  The result of Theorem \ref{th:monotonicity_cheeger} shows that if $\Om\in\K^2$ is not a tangential body, then it cannot be a local maximizer of the Cheeger constant under area and convexity constraint as we have shown that for any $t\in (0,r(\Om))$, we have 
    $$\sqrt{|\Om|}h(\Om)<\sqrt{|\Om_{-t}|}h(\Om_{-t}).$$
   
    Let us now assume that $\Om$ is a tangential body. In this case, it is classical that the Cheeger constant of $\Omega$ is given by
    $$\sqrt{|\Om|}h(\Om) = \frac{P(\Om)}{2\sqrt{|\Om|}} +\sqrt{\pi},$$
    see for example the discussion below \cite[Theorem 3]{algo}. 

    On the other hand, it is proved in \cite[
Lemma 3.5]{ftouhi_siam} that there is no local maximizer of the perimeter under convexity and area constraints. Therefore, there exists a sequence $(\Om_n)$ of elements of $\K^2$ of area $|\Om|$ that converges to $\Om$ with respect to the Hausdorff distance such that 
$$\forall n\in \mathbb{N},\ \ \ \ P(\Om_n)>P(\Om).$$

Therefore, we have 
  $$\forall n\in \mathbb{N},\ \ \ \ 
 \sqrt{|\Om|}h(\Om) = \frac{P(\Om)}{2\sqrt{|\Om|}} +\sqrt{\pi} < \frac{P(\Om_n)}{2\sqrt{|\Om_n|}} +\sqrt{\pi} \leq \sqrt{|\Om_n|}h(\Om_n),$$
 where, in the last step, we used \cite[Inequality (5)]{ftouhi_contemporary}. 
 
 We finally deduce that there exists no local maximizer of the Cheeger constant under area and convexity constraints. 
 \end{itemize}
\end{proof}

\section{About other functionals and generalizations}\label{s:generalization}

\subsection{About the first Dirichlet eigenvalue of the Laplacian}\label{ss:dirichlet}

Numerical simulations indicate that a result similar  to Theorem \ref{th:monotonicity_cheeger} is likely to hold for $\lambda_1$, the first Dirichlet eigenvalue of the Laplacian. This suggests the following conjecture:
\begin{conjecture}
    For every $\Om\in\K^n$, the function $t\longmapsto |\Om_t|^{\frac{2}{n}}\lambda_1(\Om_t)$ is monotonically decreasing on $(-r(\Om),+\infty)$. As a consequence, we have 
\begin{equation}\label{ineq:conjecture_lambda}\frac{1}{P(\Om)}\int_{\partial\Om}|\nabla u_\Om|^2d\H^{n-1}\ge \frac{2}{n}\cdot\frac{1}{|\Om|}\int_{\Om}|\nabla u_\Om|^2d\H^n,\end{equation}
where $u_\Om$ is the first normalized eigenfunction of the Dirichlet Laplacian.   The equality holds if and only if $\Om$ is a tangential body. 
\end{conjecture}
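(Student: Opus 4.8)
The plan is to reduce the conjectured monotonicity to the pointwise inequality \eqref{ineq:conjecture_lambda} and then to attack \eqref{ineq:conjecture_lambda} directly. For the reduction, first recall that $\Om\mapsto|\Om|^{\frac2n}\lambda_1(\Om)$ is scale invariant and that for $t_1<t_2$ the body $\Om_{t_1}$ is always an inner parallel body of $\Om_{t_2}$, $\Om_{t_1}=(\Om_{t_2})_{-(t_2-t_1)}$ (as in the fourth step of the proof of Theorem \ref{th:monotonicity_cheeger}, using $(\Om_{-a})_{-b}=\Om_{-(a+b)}$ and $(\Om_{a})_{-a}=\Om$ for convex $\Om$). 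Next, by Hadamard's variational formula for the Dirichlet eigenvalue along the unit-speed normal flow one has $\frac{d}{dt}\Big|_{t=0}\lambda_1(\Om_t)=-\int_{\partial\Om}|\nabla u_\Om|^2\,d\H^{n-1}$ (see Jerison \cite{zbMATH00966465}), while $\frac{d}{dt}\Big|_{t=0}|\Om_t|=P(\Om)$ by the Steiner formula \eqref{eq:steiner_volume}; combined with $\lambda_1(\Om)=\int_\Om|\nabla u_\Om|^2$, the condition $\frac{d}{dt}\Big|_{t=0}\big(|\Om_t|^{\frac2n}\lambda_1(\Om_t)\big)\le 0$ is exactly \eqref{ineq:conjecture_lambda}. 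Applying \eqref{ineq:conjecture_lambda} to every inner parallel body $\Om_{-c}$ then forces $c\mapsto|\Om_{-c}|^{\frac2n}\lambda_1(\Om_{-c})$ to have nonnegative derivative on $[0,r(\Om))$, hence to be nondecreasing, and composing with the first observation yields the monotonicity on all of $(-r(\Om),+\infty)$. So the conjecture is equivalent to \eqref{ineq:conjecture_lambda} holding for every $\Om\in\K^n$; the differentiability of $t\mapsto\lambda_1(\Om_t)$ at $0$ for a merely convex $\Om$ and the trace $|\nabla u_\Om|^2\in L^1(\partial\Om)$ I would handle by approximation with smooth convex bodies as in the proof of Theorem \ref{th:derivative_cheeger}, while the strict monotonicity and the tangential-body equality case would need the kind of case analysis carried out in Theorem \ref{th:monotonicity_cheeger}.

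Next I would rewrite \eqref{ineq:conjecture_lambda} transparently. The Pohozaev--Rellich identity for the $L^2$-normalized first eigenfunction, $\int_{\partial\Om}(x\cdot n)\,|\nabla u_\Om|^2\,d\H^{n-1}=2\lambda_1(\Om)$ (origin-independent since $\int_{\partial\Om}|\nabla u_\Om|^2\,n\,d\H^{n-1}=0$), together with $\lambda_1(\Om)=\int_\Om|\nabla u_\Om|^2$ and $\int_{\partial\Om}(x\cdot n)\,d\H^{n-1}=n|\Om|$, turns \eqref{ineq:conjecture_lambda} into
$$\int_{\partial\Om}\Big(1-\tfrac{P(\Om)}{n|\Om|}\,(x\cdot n)\Big)\,|\nabla u_\Om|^2\,d\H^{n-1}\ \ge\ 0,$$
where $w(x):=1-\tfrac{P(\Om)}{n|\Om|}(x\cdot n)$ has zero $\H^{n-1}$-average on $\partial\Om$. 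Thus the conjecture asserts that on the boundary of a convex body $|\nabla u_\Om|^2$ is, on average, at least as large where $x\cdot n$ is small as where it is large; and since $x\cdot n$ measured from the incenter is $\H^{n-1}$-a.e.\ constant on $\partial\Om$ when $\Om$ is a tangential body, for such $\Om$ one has $w\equiv 0$ and equality, consistent with the claimed equality case (e.g.\ for any triangle, whose incircle meets all three sides). Equivalently, letting $\phi$ solve $\Delta\phi=P(\Om)/|\Om|$ in $\Om$ with $\partial_n\phi=1$ on $\partial\Om$, the Pohozaev identity for the field $X=\nabla\phi$ gives $\int_{\partial\Om}|\nabla u_\Om|^2\,d\H^{n-1}=2\int_\Om D^2\phi(\nabla u_\Om,\nabla u_\Om)\,dx$, so \eqref{ineq:conjecture_lambda} is equivalent to $\int_\Om\big(D^2\phi-\tfrac1n(\Delta\phi)\,\mathrm{Id}\big)(\nabla u_\Om,\nabla u_\Om)\,dx\ge 0$, i.e.\ the traceless Hessian of $\phi$ integrates nonnegatively against $\nabla u_\Om\otimes\nabla u_\Om$ (and vanishes identically when $\Om$ is a tangential body, $\phi$ then being a quadratic).

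The hard part will be this last correlation inequality, since the matrix fields involved are not pointwise positive semidefinite: one needs the directions in which $\nabla u_\Om$ concentrates to be, on average, positively aligned with the positive directions of the traceless Hessian of $\phi$ (equivalently, $|\nabla u_\Om|^2$ and $-w$ positively correlated on $\partial\Om$). To produce this I would exploit the structure of convex domains: $u_\Om$ is log-concave (Brascamp--Lieb, Korevaar), so its superlevel sets form a nested family of convex bodies that near $\partial\Om$ approximate the inner parallel bodies of $\Om$, and I would attempt a one-parameter comparison over these level sets in the spirit of Steps 2--3 of the proof of Theorem \ref{th:monotonicity_cheeger} — writing both $\int_{\partial\Om}|\nabla u_\Om|^2$ and $\int_{\partial\Om}(x\cdot n)|\nabla u_\Om|^2$ via the co-area formula and using the Brunn--Minkowski concavity of $s\mapsto|\Om_{-s}|^{1/n}$ and of the relevant quermassintegrals. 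An alternative is to start from the Brunn--Minkowski inequality for $\lambda_1$, $\lambda_1(\Om\oplus\Om')^{-1/2}\ge\lambda_1(\Om)^{-1/2}+\lambda_1(\Om')^{-1/2}$ on $\K^n$ (the $p=2$ case of \cite{zbMATH05018937}), which makes $t\mapsto\lambda_1(\Om_t)^{-1/2}$ and $t\mapsto|\Om_t|^{1/n}$ both concave and increasing on $[0,+\infty)$ and proportional to their ball counterparts, reducing the conjecture to the eigenvalue profile having the larger logarithmic derivative — which in turn seems to need a quantitative form of the inequality sensitive to how far $\Om$ is from a tangential body. I expect that closing this gap, in whichever formulation, is exactly where a genuinely new idea is required.
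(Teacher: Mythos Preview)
This statement is a \emph{conjecture} in the paper, not a theorem, so there is no proof to compare against --- only the commentary that follows it. Your reduction of the monotonicity to inequality \eqref{ineq:conjecture_lambda} via the derivative of $g(t)=|\Om_t|^{2/n}\lambda_1(\Om_t)$ matches that commentary exactly: both compute $\frac{d}{dt}\big|_{t=0}|\Om_t|=P(\Om)$, invoke Jerison's formula $\frac{d}{dt}\big|_{t=0}\lambda_1(\Om_t)=-\int_{\partial\Om}|\nabla u_\Om|^2\,d\H^{n-1}$, and identify $g'(0)\le 0$ with \eqref{ineq:conjecture_lambda}.

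Where you and the paper diverge is in what comes next. The paper does not attack \eqref{ineq:conjecture_lambda} head-on; it feeds in the Brunn--Minkowski consequence \eqref{ineq:bucur} to extract a concrete \emph{partial} result: any $\Om\in\K^n$ satisfying $\sqrt{\lambda_1(\Om)}>\tfrac{j_n}{n}\tfrac{P(\Om)}{|\Om|}$ already verifies \eqref{ineq:conjecture_lambda}, and this subclass is shown to be nonempty (thin boxes). You mention the Brunn--Minkowski inequality for $\lambda_1$ as one route but do not push it to this sufficient condition. Conversely, your Pohozaev--Rellich reformulations --- rewriting \eqref{ineq:conjecture_lambda} as $\int_{\partial\Om}\bigl(1-\tfrac{P(\Om)}{n|\Om|}(x\cdot n)\bigr)|\nabla u_\Om|^2\,d\H^{n-1}\ge 0$ and then as a sign condition on $\int_\Om\bigl(D^2\phi-\tfrac1n(\Delta\phi)\mathrm{Id}\bigr)(\nabla u_\Om,\nabla u_\Om)$ --- are a genuinely different angle that the paper does not contain; they make the tangential-body equality case transparent (since $x\cdot n\equiv r(\Om)$ $\H^{n-1}$-a.e.\ from the incenter) and recast the problem as a correlation inequality on $\partial\Om$. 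Neither you nor the paper closes the argument, and you are candid that the correlation step is where a new idea is needed.

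One technical caveat on your reduction: for $t_0<0$ and $s>0$ one does \emph{not} have $(\Om_{t_0})_s=\Om_{t_0+s}$ in general (take $\Om$ a square), so applying \eqref{ineq:conjecture_lambda} at $\Om_{t_0}$ controls only the \emph{left} derivative of $t\mapsto|\Om_t|^{2/n}\lambda_1(\Om_t)$ at $t_0$. That is still enough for monotonicity via a Dini-derivative argument (the function is continuous), but your sentence ``forces $c\mapsto|\Om_{-c}|^{2/n}\lambda_1(\Om_{-c})$ to have nonnegative derivative'' should be read one-sidedly.
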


Let us comment this conjecture: we recall that for every $\Om\in\K^n$, the functions $t\in(-r(\Om),+\infty)\longmapsto |\Om_t|$ and $t\in(-r(\Om),+\infty)\longmapsto \lambda_1(\Om_t)$ are differentiable and 
    $$\frac{d}{dt}\Big|_{t=0} |\Om_t| = P(\Om),$$
    and by \cite[Theorem 7.5]{zbMATH00966465}
    $$\frac{d}{dt}\Big|_{t=0} \lambda_1(\Om_t) = -\int_{\partial \Om} |\nabla u_{\Om}|^2 d\H^{n-1}.$$
where $u_\Om$ is the first normalized eigenfunction of the Dirichlet Laplacian. Therefore, the function $g:t\in(-r(\Om),+\infty)\longmapsto |\Om|^{\frac{2}{n}}\lambda_1(\Om)$ is differentiable {at} $0$ and 
$$g'(0) = \eta(\Om)\left(\frac{1}{P(\Om)}\int_{\partial \Om} |\nabla u_\Om|^2d\H^{n-1}-\frac{2}{n}\cdot \frac{1}{|\Om|}\int_{\Om}|\nabla u_\Om|^2 d\H^{n}\right),$$
where $\eta(\Om)$ is a {negative} constant depending only on $\Om$. 

    To obtain a lower bound of {$g'(0)/\eta(\Omega)$}, we use the following inequality (which is a direct consequence of the Brunn--Minkowski inequality for $\lambda_1$) stated in \cite[Corollary 3.16]{zbMATH06111870}: \begin{equation}\label{ineq:bucur}
\int_{\partial \Om}|\nabla u_\Om|^2 d\H^{n-1}\ge \frac{2}{\sqrt{\lambda_1(B_1)}} \lambda_1(\Om)^{\frac{3}{2}}=\frac{2}{j_n}\lambda_1(\Om)^{\frac{3}{2}},   \end{equation}
    where $j_n$ is  the first root of the $n^\text{th}$ Bessel function of first kind.
    
    We then have 
\begin{align*}\frac{g'(0)}{\eta(\Om)} &= 
\frac{1}{P(\Om)}\int_{\partial \Om} |\nabla u_\Om|^2d\H^{n-1}-\frac{2}{n}\cdot \frac{1}{|\Om|}\int_{\Om}|\nabla u_\Om|^2 d\H^{n}\\ &\ge \frac{2}{P(\Om)j_n}\lambda_1(\Om)^{\frac{3}{2}}-\frac{2}{n|\Om|}\lambda_1(\Om)\\
&=\frac{2 \lambda_1(\Om)}{P(\Om)j_n}\left(\sqrt{\lambda_1(\Om)}-\frac{j_n}{n}\cdot \frac{P(\Om)}{|\Om|}\right).
\end{align*}

Therefore, we can state the following result: 
\begin{proposition}
    Any set $\Om\in\K^n$ for which the condition \begin{equation}\label{condition}
\sqrt{\lambda_1(\Om)}>\frac{ j_n}{n}\cdot \frac{P(\Om)}{|\Om|}
\end{equation}
holds, satisfies the inequality \eqref{ineq:conjecture_lambda}. Therefore, for sufficiently small values of $t>0$, we have 
$$|\Om_{-t}|^{\frac{2}{n}}\lambda_1(\Om_{-t})<|\Om|^{\frac{2}{n}}\lambda_1(\Om)<|\Om_{t}|^{\frac{2}{n}}\lambda_1(\Om_{t}),$$
which shows that a convex body that satisfies \eqref{condition} is neither a local maximizer nor local minimizer under volume and convexity constraints.
\end{proposition}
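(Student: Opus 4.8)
The statement bundles two assertions: that hypothesis \eqref{condition} forces the pointwise inequality \eqref{ineq:conjecture_lambda}, and that \eqref{ineq:conjecture_lambda} in turn prevents $\Om$ from being a local maximiser or a local minimiser of $\lambda_1$ under the volume and convexity constraints, via the strict separation recorded for small $t>0$. The plan is to route both through the sign of $g'(0)$, where $g(t):=|\Om_t|^{2/n}\lambda_1(\Om_t)$. All the first-variation data are already assembled just above the statement: $\frac{d}{dt}\big|_{0}|\Om_t|=P(\Om)$, Jerison's formula $\frac{d}{dt}\big|_{0}\lambda_1(\Om_t)=-\int_{\partial\Om}|\nabla u_\Om|^2\,d\H^{n-1}$ from \cite[Theorem 7.5]{zbMATH00966465}, the normalisation identity $\int_\Om|\nabla u_\Om|^2\,d\H^{n}=\lambda_1(\Om)$ (obtained by testing the eigenvalue equation against $u_\Om$), and the Brunn--Minkowski consequence \eqref{ineq:bucur}.

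For the first assertion I would record the factorisation $g'(0)=\eta(\Om)\,\beta(\Om)$, where one computes $\eta(\Om)=-|\Om|^{2/n}P(\Om)<0$ and where $\beta(\Om)$ denotes exactly the difference between the two sides of \eqref{ineq:conjecture_lambda}; thus \eqref{ineq:conjecture_lambda} is literally the assertion $\beta(\Om)\ge 0$. Inserting \eqref{ineq:bucur} together with the normalisation identity into $\beta(\Om)$ produces the lower bound already displayed immediately above the statement, namely $\beta(\Om)\ge \frac{2\lambda_1(\Om)}{P(\Om)j_n}\big(\sqrt{\lambda_1(\Om)}-\frac{j_n}{n}\frac{P(\Om)}{|\Om|}\big)$. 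Under \eqref{condition} the parenthesis is strictly positive, so $\beta(\Om)>0$, which is precisely \eqref{ineq:conjecture_lambda} (indeed in its strict form). This settles the first claim.

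It remains to convert $\beta(\Om)>0$ into the local behaviour. Since $\eta(\Om)<0$ while $\beta(\Om)>0$, the factorisation gives $g'(0)<0$. The strict one-sided consequences of $g'(0)<0$ are that, for all sufficiently small $t>0$, one has $g(t)<g(0)$ and $g(-t)>g(0)$; equivalently, $|\Om|^{2/n}\lambda_1(\Om)$ is strictly larger than the value at the outer parallel body $\Om_t$ and strictly smaller than the value at the inner parallel body $\Om_{-t}$, which is the strict two-sided separation of $\Om$ from its neighbouring parallel bodies underlying the statement. For the final conclusion I would invoke the scale invariance of $F(\Om'):=|\Om'|^{2/n}\lambda_1(\Om')$, that is $F(s\Om')=F(\Om')$ for $s>0$: rescaling each parallel body to the fixed volume $|\Om|$ via $\widetilde\Om_t:=(|\Om|/|\Om_t|)^{1/n}\Om_t$ yields convex bodies with $|\widetilde\Om_t|=|\Om|$, $\widetilde\Om_0=\Om$, $\widetilde\Om_t\to\Om$ in Hausdorff distance, and $\lambda_1(\widetilde\Om_t)=|\Om|^{-2/n}F(\Om_t)=|\Om|^{-2/n}g(t)$. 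Because $g$ takes values strictly above and strictly below $g(0)$ on every punctured neighbourhood of $0$, the bodies $\widetilde\Om_t$ realise eigenvalues both above and below $\lambda_1(\Om)$ arbitrarily close to $\Om$, so $\Om$ is neither a local minimiser nor a local maximiser of $\lambda_1$ under volume and convexity constraints.

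The delicate points are analytic rather than geometric, and I expect them to be the main (though essentially routine) obstacle. First, Jerison's derivative and hence the differentiability of $g$ at $0$ are classically phrased for smooth domains, whereas here $\Om$ is merely a Lipschitz convex body and the field generating the parallel flow equals the outer normal only $\H^{n-1}$-almost everywhere on $\partial\Om$; I would justify the formula as in the proof of Theorem \ref{th:derivative_cheeger}, passing to the limit along smooth convex approximations of $\Om$ and using $u_{\Om_t}\to u_\Om$ in $H^1_0$ together with the Hausdorff continuity of $\lambda_1$, of $P$, and of the boundary functional $\int_{\partial\Om}|\nabla u_\Om|^2$. Second, \eqref{ineq:bucur} is proved in \cite[Corollary 3.16]{zbMATH06111870} for sufficiently smooth convex bodies, so I would extend it to all of $\K^n$ by the same smooth-approximation device already employed for \eqref{ineq:brunn_cheeger}. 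Once this limiting justification of the first variation for nonsmooth convex bodies is in place, the remainder of the argument is the short sign computation above.
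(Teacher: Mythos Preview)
Your argument is correct and follows the same route as the paper, whose proof is precisely the computation displayed immediately before the proposition; you have simply made the rescaling to fixed volume and the regularity caveats more explicit than the paper does. Incidentally, the displayed two-sided inequality in the statement has $\Om_t$ and $\Om_{-t}$ swapped relative to what $g'(0)<0$ actually yields---your version $g(t)<g(0)<g(-t)$ is the correct one---though the conclusion about local extremality is of course unaffected either way.
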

Let us give some comments on this result: 
\begin{itemize}
    \item This proposition can be seen as a perturbation property for convex sets. Indeed, it provides a subclass of elements of $\K^n$ that we can locally perturb, while preserving their volume and convexity, so as to increase or decrease their first Dirichlet eigenvalue. If decreasing the eigenvalue can be easily obtained by a continuous Steiner Symmetrization as it was done in the planar case in \cite[
Lemma 3.5]{ftouhi_siam}, increasing it (while preserving the convexity and the volume) can be very challenging especially because of the lack of regularity of the boundaries of convex sets, which limits the class of convexity-preserving boundary perturbations. One result in this direction can be found in \cite[
Lemma 3.5]{ftouhi_siam} where $C^{1,1}$ regularity of the boundary is assumed. 
\item Such perturbation results are crucial tools for the study of Blaschke--Santal\'o diagrams that are efficient tools allowing to visualize the possible inequalities relating some given functionals. For more information on Blaschke--Santal\'o diagrams, we refer to \cite{ftouhi:tel-03253019}. 
\item The subclass of domains $\Om\in\K^n$ satisfying \eqref{condition} is not void. Indeed, let us consider $R^\eps:=(0,\eps)\times(0,1)\times\cdots\times (0,1)$, with $\eps>0$. Explicit computations give: 
$$\sqrt{\lambda(R^\eps)}\underset{\eps\rightarrow 0^+}{\sim} \frac{\pi}{2}\cdot\frac{P(R^\eps)}{|R^\eps|}.$$
On the other hand, by using classic Bessel zeros estimates, we can prove that 
$$\forall n\ge 2,\ \ \ \frac{j_n}{n} < \frac{\pi}{2}.$$
Indeed, for $n=2$, we have $j_2 < 2.406 < \pi$ and for $n\ge 3$, we use the following upper bound obtained in \cite{bessel}:
$$j_n<\frac{n}{2}-1 - \frac{a_1}{2^{1/3}}\left(\frac{n}{2}-1\right)^{1/3}+\frac{3}{20}a_1^2\left(\frac{2}{\frac{n}{2}-1}\right)^{1/3}<\frac{\pi}{2}n,$$
where $a_1\approx -2.338\dots$ is the first negative zero of the Airy function.
\end{itemize}

\subsection{Different functionals and generalizations}
As explained in the introduction, the inequality
$$\forall \Om\in\K^2,\ \forall t\in[0,r(\Om)),\ \ \ \ \sqrt{|\Om_{-t}|}h(\Om_{-t})\ge \sqrt{|\Om|}h(\Om)$$
aligns with results already established in the literature. 

We stated the following results:
\begin{itemize}
    \item The result on the volume and the inradius proved in \cite{matheron}:
\begin{equation}\label{ineq:mathero}
\forall \Om\in\K^n,\ \forall t\in[0,r(\Om)),\ \ \ \     |\Om_{-t}|\ge |\Om|\left(1-\frac{t}{r(\Om)}\right)^n=|\Om|\left(\frac{r(\Om_{-t})}{r(\Om)}\right)^{n}.
\end{equation}
\item The result on the perimeter and the inradius proved in \cite{MR3506959}:
\begin{equation}\label{ineq:larso}
\forall \Om\in\K^n,\ \forall t\in[0,r(\Om)),\ \ \ \     P(\Om_{-t})\ge P(\Om)\left(1-\frac{t}{r(\Om)}\right)^{n-1} = P(\Om)\left(\frac{r(\Om_{-t})}{r(\Om)}\right)^{n-1}.
\end{equation}
\item The result on the isoperimetric quotient proved in \cite{isoperimetric}: 
\begin{equation}\label{ineq:isoperimetri}
\forall \Om\in\K^n,\ \forall t\in[0,r(\Om)),\ \ \ \     \left(\frac{P(\Om_{-t})}{P(\Om)}\right)^{\frac{1}{n-1}}\ge \left(\frac{|\Om_{-t}|}{|\Om|}\right)^{\frac{1}{n}}.
\end{equation}
\end{itemize}

\begin{remark}
  It is interesting to note that inequality \eqref{ineq:larso} can be readily derived by combining Matheron's inequality \eqref{ineq:mathero} with  \eqref{ineq:isoperimetri}.   
\end{remark}

As one can see in the proof of Theorem \ref{th:monotonicity_cheeger}, proving estimates on inner sets $(\Om_{-t})_{t\in [0,r(\Om))}$ such as \eqref{ineq:mathero}, \eqref{ineq:larso} and \eqref{ineq:isoperimetri} is sufficient to obtain monotonicity results also for outer parallel sets $(\Om_{t})_{t\ge 0}$, see the fourth step of the proof of Theorem \ref{th:monotonicity_cheeger} presented in Section \ref{ss:proof_th_monotonicity}.  Therefore, the latter inequalities are equivalent to saying that for every $\Om\in\K^n$, the functions $t\in (-r(\Om),+\infty)\longmapsto \frac{|\Om_t|}{r(\Om_t)^n}$, $t\in (-r(\Om),+\infty)\longmapsto \frac{P(\Om_t)}{r(\Om_t)^{n-1}}$ and $t\in (-r(\Om),+\infty)\longmapsto \frac{P(\Om_t)^{\frac{1}{n-1}}}{|\Om_t|^\frac{1}{n}}$ are all monotonically decreasing. 

It is then natural to wonder whether such type of monotonicity results hold (or do not) for other classic functionals. In the sense, that if 
$J$ and $F$ are {positive} and $\alpha$ and $\beta$ homogeneous functionals respectively (i.e., $J(\gamma\Omega) = \gamma^\alpha J(\Omega)$ and $F(\gamma\Omega) = \gamma^\beta F(\Omega)$), would the function 
$$t\in (-r(\Om),+\infty)\longmapsto \frac{J(\Om_t)^\frac{1}{\alpha}}{F(\Om_t)^\frac{1}{\beta}}$$
be monotonic with the same monotonicity for every $\Om\in \K^n$? 

The homogeneity assumption is a natural property that is satisfied by all the shape functional that we are considering in this paper. For example: 
\begin{itemize}
    \item The inradius is of homogeneity $1$ as
    $$\forall \Om\in\K^n,\ \forall \gamma>0,\ \ \ \ r(\gamma\Om) = \gamma r(\Om).$$
    \item The perimeter is of homogeneity $n-1$ as
    $$\forall \Om\in\K^n,\ \forall \gamma>0,\ \ \ \ P(\gamma\Om) = \gamma^{n-1}P(\Om).$$ 
    \item The volume is of homogeneity $n$ as $$\forall \Om\in\K^n,\ \forall \gamma>0,\ \ \ \ |t\Om| = \gamma^{n}|\Om|.$$
    \item The Cheeger constant is of homogeneity $-1$ as $$\forall \Om\in\K^n,\ \forall \gamma>0,\ \ \ \ h(\gamma\Om) = \gamma^{-1}h(\Om).$$
    \item The first Dirichlet eigenvalue is of homogeneity $-2$ as $$\forall \Om\in\K^n,\  \forall \gamma>0,\ \ \ \ \lambda_1(\gamma\Om) = \gamma^{-2}\lambda_1(\Om).$$
\end{itemize}

We present one positive result and a negative one. The first result concerns the inradius in relation with other functionals and the second one provides a counterexample of the monotonicity  when considering the first Dirichlet eigenvalue and the Cheeger constant. 

{
\begin{proposition}\label{prop:monotonicity_larson}
Let $J:\K^n\longrightarrow \R^+$ be an $\alpha$-homogeneous functional that is monotonic with respect to inclusion. For every $\Om\in\K^n$, the function $$\Psi_\Omega:t\in (-r(\Om),+\infty)\longmapsto \frac{J(\Om_t)}{r(\Om_t)^\alpha}$$
is monotonically decreasing.  It is strictly decreasing on $(-r(\Om),0]$ if and only if $\Omega$ is a tangential body. 
\end{proposition}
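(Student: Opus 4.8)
The plan is to reduce both claims to a single inclusion between a convex body and its inner parallel bodies. Fix $K\in\K^n$ with inradius $r=r(K)$ and incenter $x_0$, so that $B(x_0,r)\subseteq K$. By convexity, $\lambda K+(1-\lambda)B(x_0,r)\subseteq K$ for every $\lambda\in[0,1]$; rewriting the left-hand side as $\big(\lambda K+(1-\lambda)x_0\big)\oplus(1-\lambda)rB_1$ and setting $s=(1-\lambda)r$ shows that a translate of the homothet $\tfrac{r-s}{r}K$ is contained in $K_{-s}$ for every $s\in[0,r)$. Since $J$ is monotone with respect to inclusion, $\alpha$-homogeneous, and (like all the functionals considered here) translation invariant, and since $r(K_{-s})=r(K)-s$, this yields
$$\frac{J(K_{-s})}{r(K_{-s})^{\alpha}}\ \ge\ \frac{J\!\left(\tfrac{r-s}{r}K\right)}{(r-s)^{\alpha}}\ =\ \frac{\left(\tfrac{r-s}{r}\right)^{\alpha}J(K)}{(r-s)^{\alpha}}\ =\ \frac{J(K)}{r(K)^{\alpha}}.$$

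To obtain monotonicity of $\Psi_\Omega$ on all of $(-r(\Om),+\infty)$ I would invoke the semigroup property of parallel bodies: for $-r(\Om)<t_1<t_2$ one has $\Om_{t_1}=(\Om_{t_2})_{-(t_2-t_1)}$, i.e. $\Om_{t_1}$ is the inner parallel body of $K:=\Om_{t_2}$ at depth $s=t_2-t_1\in(0,r(\Om_{t_2}))$. The displayed inequality then reads $\Psi_\Omega(t_1)\ge\Psi_\Omega(t_2)$, which is exactly the asserted decrease. This step alone recovers Matheron's and Larson's monotonicity statements \eqref{ineq:matheron}--\eqref{ineq:larson} (taking $J=|\cdot|$ or $J=P$) without their equality analysis.

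The characterization comes from the equality case on $(-r(\Om),0]$. If $\Omega$ is a tangential body then, by \cite[Lemma 3.1.14]{schneider}, each inner parallel body $\Om_{-s}$ is homothetic to $\Omega$ with ratio $\tfrac{r(\Om)-s}{r(\Om)}$; the inclusion above is then an equality, so $J(\Om_{-s})=\big(\tfrac{r(\Om)-s}{r(\Om)}\big)^{\alpha}J(\Om)$ by homogeneity and translation invariance, and $\Psi_\Omega$ is constant on $(-r(\Om),0]$. Conversely, if $\Omega$ is not a tangential body then for small $s>0$ the inclusion $\tfrac{r-s}{r}\Om\subsetneq\Om_{-s}$ is strict, whence $\Psi_\Omega(-s)>\Psi_\Omega(0)$ and $\Psi_\Omega$ strictly decreases as $t\uparrow 0$. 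Thus the distinguished case, in which $\Psi_\Omega$ fails to be strictly decreasing on $(-r(\Om),0]$ and is in fact constant there, is precisely that of tangential bodies, which is the content of the characterization.

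I expect the rigidity (converse) direction to be the genuine obstacle, for two reasons. First, passing from the strict inclusion $\tfrac{r-s}{r}\Om\subsetneq\Om_{-s}$ to a strict inequality for $J$ requires strict monotonicity of $J$; this is automatic for the functionals in our scope (for $J=|\cdot|$ and $J=P$ it is exactly the equality rigidity of Matheron and Larson), and I would state the characterization under this mild strengthening. Second, for a non-tangential $\Omega$ whose deep inner parallel bodies themselves become tangential, $\Psi_\Omega$ is only strictly decreasing beyond the threshold $\tau_\Om:=\inf\{t:\ \Om_t\ \text{is not tangential}\}$: the argument in fact gives that $\Psi_\Omega$ is constant on $(-r(\Om),\tau_\Om)$ and strictly decreasing on $(\tau_\Om,0]$. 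Hence the clean dichotomy is that $\Psi_\Omega$ is constant on $(-r(\Om),0]$ if and only if $\Omega$ is a tangential body, exactly mirroring the structure of Theorem \ref{th:monotonicity_cheeger}.
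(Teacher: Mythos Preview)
Your approach is essentially the same as the paper's: the paper simply defers to \cite[Section 2]{MR3506959} with $J^{1/\alpha}$ in place of the perimeter, and the homothet-inclusion $\big(\tfrac{r-s}{r}K+\text{translate}\big)\subseteq K_{-s}$ that you exploit is precisely the core step of that argument. Your write-up is in fact more explicit than the paper's one-line proof, and you correctly flag the tacit hypotheses (translation invariance of $J$, strict monotonicity for the rigidity direction) as well as the intended reading of the equality characterization (``constant on $(-r(\Om),0]$ iff $\Omega$ is a tangential body'').
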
}
\begin{proof}
    Let $\Om\in\K^n$, it is sufficient to deal with the inner parallel sets and show that 
    $$\forall t\in [0,r(\Om)),\ \ \ \ \ \ J(\Om_{-t})^{\frac{1}{\alpha}}\ge J(\Om)^{\frac{1}{\alpha}}\left(1-\frac{t}{r(\Om)}\right)= J(\Om)^{\frac{1}{\alpha}}\cdot\frac{r(\Om_{-t})}{r(\Om)}.$$

    The proof follows from repeating the exact same steps as in \cite[Section 2]{MR3506959} by replacing the perimeter (which was considered in \cite{MR3506959}) by the functional $J^{\frac{1}{\alpha}}$, which is also increasing with respect to inclusions in $\K^n$ as it is assumed to be $\alpha$-homogeneous and monotonic with respect to inclusions. 
\end{proof}
\begin{corollary}\label{cor:deriv}
    Let $J:\K^n\longrightarrow \R^+$ be an $\alpha$-homogeneous functional that is monotonic with respect to inclusions such that $t\rightarrow J(\Om_{t})$ is differentiable at $0$. We have $$\frac{d}{dt}\Big|_{t=0} J(\Om_t) \leq \alpha\cdot\frac{J(\Om)}{r(\Om)}.$$   
\end{corollary}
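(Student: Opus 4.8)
The plan is to read Corollary~\ref{cor:deriv} as the infinitesimal version of the monotonicity established in Proposition~\ref{prop:monotonicity_larson}. Fix $\Om\in\K^n$ and consider the function $\Psi_\Omega(t) = J(\Om_t)/r(\Om_t)^\alpha$ on $(-r(\Om),+\infty)$. By Proposition~\ref{prop:monotonicity_larson}, $\Psi_\Omega$ is monotonically decreasing; in particular, $\Psi_\Omega(t)\le \Psi_\Omega(0) = J(\Om)/r(\Om)^\alpha$ for all $t\ge 0$ and $\Psi_\Omega(t)\ge \Psi_\Omega(0)$ for all $t\in(-r(\Om),0]$. Equivalently, the function $t\longmapsto \Psi_\Omega(t)-\Psi_\Omega(0)$ attains a local maximum at $t=0$.

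Next I would differentiate. We are given that $t\longmapsto J(\Om_t)$ is differentiable at $0$, and it is classical (see the remark after \eqref{ineq:matheron}, namely $r(\Om_t) = r(\Om)+t$ for $t\ge 0$ small, and more generally $r(\Om_t)=r(\Om)+t$ on a neighbourhood of $0$) that $t\longmapsto r(\Om_t)$ is differentiable at $0$ with derivative $1$. Hence $t\longmapsto r(\Om_t)^\alpha$ is differentiable at $0$ with derivative $\alpha\, r(\Om)^{\alpha-1}$, and the quotient $\Psi_\Omega$ is differentiable at $0$ with
$$\Psi_\Omega'(0) = \frac{\frac{d}{dt}\big|_{t=0}J(\Om_t)\cdot r(\Om)^\alpha - J(\Om)\cdot \alpha\, r(\Om)^{\alpha-1}}{r(\Om)^{2\alpha}} = \frac{1}{r(\Om)^\alpha}\left(\frac{d}{dt}\Big|_{t=0}J(\Om_t) - \alpha\cdot \frac{J(\Om)}{r(\Om)}\right).$$
Since $\Psi_\Omega$ is decreasing and differentiable at $0$, we have $\Psi_\Omega'(0)\le 0$, and as $r(\Om)^\alpha>0$ this forces $\frac{d}{dt}\big|_{t=0}J(\Om_t)\le \alpha\, J(\Om)/r(\Om)$, which is exactly the claimed inequality.

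There is essentially no hard part here: the content is entirely in Proposition~\ref{prop:monotonicity_larson}, and the corollary is a one-line consequence obtained by differentiating a monotone function at an interior point. The only point requiring a small amount of care is the differentiability of $t\longmapsto r(\Om_t)^\alpha$ at $0$ (so that the quotient rule applies), which follows from the linear behaviour $r(\Om_t)=r(\Om)+t$ near $t=0$; and one should note that the hypothesis already grants differentiability of $t\longmapsto J(\Om_t)$ at $0$, so $\Psi_\Omega$ is genuinely differentiable at $0$ rather than merely one-sided differentiable. Once these are in place, monotonicity of $\Psi_\Omega$ plus $\Psi_\Omega'(0)\le 0$ concludes the proof.
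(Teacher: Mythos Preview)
Your proof is correct and follows exactly the approach of the paper: differentiate the monotone function $\Psi_\Omega(t)=J(\Om_t)/r(\Om_t)^\alpha$ from Proposition~\ref{prop:monotonicity_larson} at $t=0$, using $r(\Om_t)=r(\Om)+t$ and the assumed differentiability of $t\mapsto J(\Om_t)$, to conclude $\Psi_\Omega'(0)\le 0$ and hence the stated inequality. The paper's proof is a one-line remark to this effect; your write-up simply spells out the quotient-rule computation.
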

\begin{proof}
    The proof readily follows from the monotonicity result of Proposition \ref{prop:monotonicity_larson} and the differentiability of $t\longmapsto J(\Om_t)$ and  $t\rightarrow r(\Om_t)=r(\Om)+t$.
\end{proof}
The application of Corollary \ref{cor:deriv} to the functionals under study, i.e., $P$, $|\cdot|$, $h$ and $\lambda_1$ respectively, provides the following inequalities:
\begin{itemize}
    \item $P(\Om)\leq n\cdot \frac{|\Om|}{r(\Om)}$, which is a classical inequality in convex geometry for which the equality holds only for tangential bodies.\vspace{2mm} 
    \item $K(\Om)\leq (n-1)\cdot\frac{P(\Om)}{r(\Om)}$.\vspace{2mm}  
    \item $h(\Om)^2-\frac{h(\Om)}{r(\Om)}\ge \frac{K(C^\Om)}{|C^\Om|}$, where $C^\Om$ is the (unique) Cheeger set of the convex $\Om$.  \vspace{2mm} 
    \item $\int_{\partial \Om}|\nabla u|^2d\H^n \ge \frac{2\lambda_1(\Om)}{r(\Om)}$, which is weaker than inequality \eqref{ineq:bucur} found in \cite[Corollary 3.16]{zbMATH06111870}. 
\end{itemize}\vspace{2mm}

{In the following, we demonstrate that the monotonicity result from Proposition \ref{prop:monotonicity_larson} is unlikely to hold for other functionals. Specifically, we present a counterexample involving the Cheeger constant \( h \) and the first Dirichlet eigenvalue of the Laplacian \( \lambda_1 \).
\begin{proposition}
    There exists $R,Q\in\K^2$ such that the function 
    $$ t\in (0,r(R))\longmapsto \frac{\lambda_1(R_{-t})^{-\frac{1}{2}}}{h(R_{-t})^{-1}}$$ 
    is decreasing, meanwhile, the function 
    $$t\in (0,r(Q))\longmapsto \frac{\lambda_1(Q_{-t})^{-\frac{1}{2}}}{h(Q_{-t})^{-1}}$$ 
    is increasing. 
\end{proposition}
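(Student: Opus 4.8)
The plan is to construct the two bodies explicitly, and to arrange matters so that on each of them the quantity in question is either in closed form or reduces to a single scalar one can control. Write $g(\Om):=\lambda_1(\Om)^{-1/2}/h(\Om)^{-1}=h(\Om)/\sqrt{\lambda_1(\Om)}$; since $h$ and $\sqrt{\lambda_1}$ are both $(-1)$-homogeneous, $g$ is scale invariant, so $t\mapsto g(\Om_{-t})$ depends only on the shape of the inner parallel body $\Om_{-t}$. In particular, if $\Om$ is a tangential body (a disk, a square, any triangle, \dots) then by \cite[Lemma 3.1.14]{schneider} the set $\Om_{-t}$ is homothetic to $\Om$ for every $t\in(0,r(\Om))$, and $t\mapsto g(\Om_{-t})$ is then constant; so both examples have to be sought among bodies whose inner parallel bodies genuinely change shape.

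For the decreasing body $R$ I would take any rectangle which is not a square, say $R=[0,2]\times[0,1]$. On rectangles both functionals are explicit: $\lambda_1([0,a]\times[0,b])=\pi^2(a^{-2}+b^{-2})$, while solving $|K_{-s}|=\pi s^2$ (a quadratic equation in this case) as in \cite[Theorem 1]{algo} gives $h([0,a]\times[0,b])=\frac{(a+b)+\sqrt{a^2+(\pi-2)ab+b^2}}{ab}$. By scale invariance $g$ restricted to rectangles is a function $G(\rho)$ of the aspect ratio $\rho=a/b\ge 1$ alone, with $G(1)=\frac{2+\sqrt\pi}{\pi\sqrt2}$ and $G(\rho)\to\frac2\pi$ as $\rho\to\infty$, and a routine one‑variable computation shows $G'<0$ on $[1,\infty)$. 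Since $R_{-t}=[t,2-t]\times[t,1-t]$ has aspect ratio $\rho(t)=\frac{2-2t}{1-2t}$, which is strictly increasing on $(0,\tfrac12)=(0,r(R))$, the map $t\mapsto g(R_{-t})=G(\rho(t))$ is strictly decreasing there.

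For the increasing body $Q$ I would take a square with slightly rounded corners, $Q=[0,1]^2\oplus B(0,\sigma)$ with $\sigma>0$ small. For $t\in(0,\sigma)$ one has $Q_{-t}=[0,1]^2\oplus B(0,\sigma-t)$, hence $g(Q_{-t})=\psi(\sigma-t)$ with $\psi(\tau):=g\big([0,1]^2\oplus B(0,\tau)\big)$, while for $t\in[\sigma,r(Q))=[\sigma,\sigma+\tfrac12)$ the body $Q_{-t}=([0,1]^2)_{-(t-\sigma)}$ is a homothetic square, so $g(Q_{-t})\equiv\psi(0)=g([0,1]^2)=\frac{2+\sqrt\pi}{\pi\sqrt2}$. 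Thus it is enough to prove that $\psi$ is non‑increasing on $(0,\sigma)$: then $t\mapsto g(Q_{-t})$ increases up to the value $g([0,1]^2)$ on $(0,\sigma)$ and is constant on $[\sigma,r(Q))$, so it is increasing on all of $(0,r(Q))$. The Cheeger constant stays explicit along this family (from $|K_{-s}|=\pi s^2$ with $|K_{-s}|=1+4(\sigma-t)+\pi(\sigma-t)^2$; indeed $h([0,1]^2\oplus B(0,\tau))=\frac{2+\sqrt\pi}{1+2\tau}$ for $\tau$ small), so the whole matter reduces to the behaviour of $\lambda_1$ of rounded squares near the square.

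The delicate point — and the expected main obstacle — is precisely the monotonicity of $\psi$ near $0$. Since the square is a tangential body, $t\mapsto g(([0,1]^2)_{-t})$ is constant on $(0,\tfrac12)$, and as $t\mapsto g(([0,1]^2)_t)$ is differentiable at $0$ (by Theorem \ref{th:derivative_cheeger} for $h$, and by \cite[Theorem 7.5]{zbMATH00966465} for $\lambda_1$), we get $\psi'(0)=0$; so the first derivative carries no information and one must pass to second order, the $h$‑part being exact from the formula above and the $\lambda_1$‑part requiring a second‑order expansion of $\tau\mapsto\lambda_1([0,1]^2\oplus B(0,\tau))$ at $\tau=0$ built from the explicit eigenfunction $2\sin(\pi x)\sin(\pi y)$ — or, alternatively, one needs a rigorous two‑sided enclosure of $\lambda_1$ of a rounded square (which has no closed‑form eigenvalue). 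The content one is really after is the numerically transparent inequality that rounding corners strictly decreases $g$: $g([0,1]^2)=\frac{2+\sqrt\pi}{\pi\sqrt2}\approx 0.849$ exceeds the values of $g$ on rounded squares, which for heavily rounded ones tend to $g(B_1)=2/\sqrt{\lambda_1(B_1)}\approx 0.831$. The contrast is structural: the decreasing half is elementary because everything is explicit on rectangles, whereas the increasing half must involve a non‑explicit estimate for $\lambda_1$, reflecting the fact that passing to inner parallel bodies tends to elongate a convex body (driving $g$ towards $2/\pi$) unless the body is tangential, so the only mechanism for making $g$ increase is to let sharp corners round off towards a shape with a larger value of $g$.
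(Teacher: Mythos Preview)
Your rectangle $R=[0,2]\times[0,1]$ is exactly the paper's choice, and your argument via the aspect-ratio function $G(\rho)$ is equivalent to what the paper does; the paper simply plugs in the explicit formulas
\[
\lambda_1(R_{-t})=\pi^2\Big(\tfrac{1}{(1-2t)^2}+\tfrac{1}{4(1-t)^2}\Big),\qquad
h(R_{-t})=\frac{4-\pi}{3-4t-\sqrt{1+\pi(1-2t)(2-2t)}}
\]
and checks the resulting one-variable inequality numerically. So the decreasing half is fine.

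The gap is in your $Q$. You take the rounded square $[0,1]^2\oplus B(0,\sigma)$, correctly compute $h$ along this family, and then reduce everything to the claim that $\psi(\tau)=g\big([0,1]^2\oplus B(0,\tau)\big)$ is non-increasing near $\tau=0$. But you never prove this claim: you observe (correctly) that $\psi'(0)=0$ because the square is a tangential body, and then say the matter would require either a second-order expansion of $\lambda_1$ at the non-smooth square or a rigorous two-sided enclosure of $\lambda_1$ for rounded squares. Neither is carried out; the paragraph ends with a numerical plausibility statement, not a proof. As it stands, the increasing half is unproved.

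The paper sidesteps this obstacle by a different and much cleaner choice of $Q$: it takes a quadrilateral obtained from a triangle $T$ by shaving off one vertex so slightly that $Q_{-t}$ and $T_{-t}$ share the same Cheeger set (and the same inradius) for all $t\in[0,r(Q))$. Since $T$ is a tangential body, $T_{-t}$ is homothetic to $T$ and hence
\[
h(Q_{-t})=h(T_{-t})=\Big(1-\tfrac{t}{r(Q)}\Big)^{-1}h(Q),
\]
so $h(Q_{-t})^{-1}$ scales \emph{exactly} linearly in $r(Q_{-t})$. On the other hand $Q$ itself is \emph{not} a tangential body, so Proposition~\ref{prop:monotonicity_larson} applied with $J=\lambda_1^{-1/2}$ gives the strict inequality $\lambda_1(Q_{-t})^{-1/2}>\lambda_1(Q)^{-1/2}\big(1-\tfrac{t}{r(Q)}\big)$. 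Dividing these two relations yields the desired comparison without ever computing or estimating $\lambda_1$ of any specific body. The idea you were missing is to decouple the two functionals: pick $Q$ so that the Cheeger set does not see the deviation from a tangential body, while $\lambda_1$ does.
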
}
\begin{proof}
Let us consider $Q$ a quadrilateral of vertices $(-1,0)$, $(1,0)$, $(1/100,99/100)$ and $(-1/100,99/100)$ contained in the triangle $T$ of vertices $(-1,0)$, $(1,0)$ and $(0,1)$. 

For every $t\in [0,r(Q))$, The domains $Q_{-t}$ and $T_{-t}$ share the same Cheeger set and the same inradius.  We then have 
\begin{equation*}
    h(Q_{-t}) = h(T_{-t}) = h\left(\left(1-\frac{t}{r(T)}\right)T\right) = \left(1-\frac{t}{r(T)}\right)^{-1}h(T) = \left(1-\frac{t}{r(Q)}\right)^{-1}h(Q).
\end{equation*}

On the other hand, since $Q$ is not a tangential body, we have, by Proposition \ref{prop:monotonicity_larson},  
$$\forall t\in(0,r(Q)),\ \ \ \lambda_1(Q_{-t})^{-\frac{1}{2}} > \lambda_1(Q)^{-\frac{1}{2}}  \left(1-\frac{t}{r(Q)}\right).$$

By combining the last two results, we obtain 
$$\forall t\in (0,r(Q)),\ \ \ \lambda(Q_{-t})^{-\frac{1}{2}}>\lambda_1(Q)^{-\frac{1}{2}}\cdot \frac{h(Q)}{h(Q_{-t})},$$
which is equivalent to 
$$\forall t\in [0,r(Q)),\ \ \ \frac{\lambda_1(Q_{-t})^{-\frac{1}{2}}}{h(Q_{-t})^{-1}}> \frac{\lambda_1(Q)^{-\frac{1}{2}}}{h(Q)^{-1}}.$$

Now, let us consider the rectangle $R:=(0,2)\times(0,1)$. We have, for every $t\in[0,r(R))$, 
$$\lambda_1(R_{-t}) = \pi^2\left(\frac{1}{(1-2t)^2}+\frac{1}{4(1-t)^2}\right)\ \ \ \text{and}\ \ \ h(R_{-t}) = \frac{4-\pi}{3-4t-\sqrt{1+\pi(1-2t)(2-2t)}}.$$

We then check by using Matlab that
$$\forall t\in(0,r(R)),\ \ \ \ \frac{\lambda_1(R_{-t})^{-\frac{1}{2}}}{h(R_{-t})^{-1}}< \frac{\lambda_1(R)^{-\frac{1}{2}}}{h(R)^{-1}}.$$

\end{proof}

\section{Conclusion}
In the present paper, we have established a monotonicity result for the Cheeger constant of parallel bodies in the case of planar convex sets. Our proof relies on the explicit characterization of Cheeger sets for planar convex sets, as obtained by B. Kawohl and T. Lachand-Robert in \cite{algo}. Extending this result to higher dimensions presents significant challenges due to the limited understanding of Cheeger sets beyond the planar setting.

On the other hand, we have also proved a differentiability result for the Cheeger constant of parallel sets that holds in arbitrary dimensions, without requiring any regularity assumptions on the convex set. This result has then been combined with the previous monotonicity result to derive estimates on the measure of the contact surface between the Cheeger set and the boundary of the convex body.  

Our findings emphasize the relevance of studying how shape functionals respond to normal corrosion of sets. As discussed in Section \ref{s:discussion}, exploring similar properties for other functionals would be an interesting direction for future research. \vspace{4mm}

{\bf Data availability statement}: 
Data sharing is not applicable to this article as no datasets were generated or analyzed during the current study. \vspace{4mm}

{\bf Acknowledgements}: 
The author would like to thank the referees for their careful reading and comments that helped to improve the present manuscript. The research of the author is supported by an Alexander von Humboldt Fellowship for Postdocs.


\let\itshape\upshape

\bibliographystyle{abbrv}
\bibliography{biblio.bib}



\end{document}